\documentclass[twoside,10pt]{amsart}

\usepackage{amsthm,amsmath,amsfonts,epsfig,amssymb}
\usepackage[T1]{fontenc}
\usepackage{graphicx}
\usepackage{enumerate}
\usepackage{xy}
\usepackage{graphics}
\xyoption{all}
\entrymodifiers={+!!<0pt,\fontdimen22\textfont2>}


\newtheorem{thm}{Theorem}[section]

\newtheorem{prop} [thm]{Proposition}
\newtheorem{lem} [thm]{Lemma}
\newtheorem{cor}[thm]{Corollary}

\newtheorem{quest}[thm]{Question}
\newtheorem*{thm*}{Theorem}
\newtheorem*{quest*}{Question}
\theoremstyle{definition}
\newtheorem{defin}[thm]{Definition}
\theoremstyle{remark}
\newtheorem{rem}[thm]{Remark}


\def\R{\mathbb{R}}

\def\Z{\mathbb{Z}}
\def\N{\mathbb{N}}

\def\A{\mathbb {A}}
\def\O{\mathcal O}


\def\hom#1#2{\mathrm{Hom}(#1,#2)}
\def\homm#1#2#3{\mathrm{Hom}_{#1}(#2,#3)}

\def\exten#1#2#3#4{\mathrm{Ext}_{#2}^{#1}(#3,#4)}

\def\spec#1{\mathrm{Spec}(#1)}

\def\cha#1{\widehat {#1}}

\def\Ko#1{\widetilde K_0Sp(#1)}
\def\GW#1#2{GW^{#1}_{red}(#2)}
\def\SL#1{SL_{#1} }

\begin{document}

\title{A degree map on unimodular rows}
\author{J. Fasel}
\date{}
\email{jean.fasel@gmail.com}
\address{Jean Fasel \\
Mathematisches Institut der Universit\"at M\"unchen \\
Theresienstr. 39\\
80333 M\"unchen, Germany}

\begin{abstract}
Let $k$ be a field and let $(\A^n-0)$ be the punctured affine space. We associate to any morphism $g:(\A^n-0)\to (\A^n-0)$ an element in the Witt group $W(k)$ that we call the degree of $g$. We then use this degree map to give a negative answer to a question of M. V. Nori about unimodular rows.
\end{abstract}

\maketitle

\pagenumbering{arabic}



\section*{Introduction}

Let $R$ be a ring and let $P$ be a projective $R$-module such that $P\oplus R\simeq R^{n+1}$ for some $n\in\N$. The obvious question is to know if such an isomorphism yields an isomorphism $P\simeq R^n$. In general, this is not the case and finding general conditions for the answer to be positive is an important an extensively studied problem. Recall that one can associate to a projective module as above a unimodular row of length $n+1$, i.e. a row $(a_0,\ldots,a_n)$ such that $\sum Ra_i=R$. Let $Um_{n+1}(R)$ be the set of unimodular rows of length $n+1$. It is clear that $GL_{n+1}(R)$ acts on $Um_{n+1}(R)$ by right multiplication, and therefore so does any subgroup of $GL_{n+1}(R)$. In general, one is particularly interested by the groups $E_{n+1}(R)$ generated by elementary matrices and $SL_{n+1}(R)$, because $Um_{n+1}(R)/E_{n+1}(R)$ corresponds to some cohomotopy group in the topological situation and then carries a groups structure when $n$ is "reasonable" compared to the Krull dimension of $d$ (\cite[Theorem 4.1]{vdk}) and because $Um_{n+1}(R)/SL_{n+1}(R)$ classifies up to isomorphism projective modules $P$ such that $P\oplus R\simeq R^{n+1}$. One says that a unimodular row $v$ is completable if $v\in e_1SL_{n+1}(R)$ where $e_1=(1,0,\ldots,0)$. Thus the question to know if $P\oplus R\simeq R^{n+1}$ implies $P\simeq R^n$ reduces to the question to know if the associated unimodular row is completable. 

The first general condition on unimodular rows to be completable was given by Suslin (\cite[Theorem 2]{Suslin77c}).

\begin{thm*}
Let $R$ be an arbitrary ring, let $(a_0,\ldots,a_n)$ be a unimodular row and let $m_0,\ldots,m_n$ be positive integers. Suppose that $\prod_{i=0}^n m_i$ is divisible by $n!$. Then the row $(a_0^{m_0},\ldots,a_n^{m_n})$ is completable.
\end{thm*}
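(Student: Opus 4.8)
\bigskip

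\noindent\emph{Proof proposal.} For $n=0$ there is nothing to prove (the associated module is zero), and for $n=1$ every unimodular row $(a_0,a_1)$ is completable: if $a_0b_0+a_1b_1=1$, then $\bigl(\begin{smallmatrix} a_0 & a_1\\ -b_1 & b_0\end{smallmatrix}\bigr)\in SL_2(R)$ has first row $(a_0,a_1)$, and $1!$ divides everything. So assume $n\ge 2$. The first step is to reduce to the case in which a single exponent equals $n!$ and the others equal $1$. Following Suslin, one establishes an identity of weak Mennicke type: for a unimodular row $(a_0,\dots,a_n)$, positive integers $p,q$ and any index $1\le i\le n$, the rows $(a_0^{p},a_1,\dots,a_{i-1},a_i^{q},a_{i+1},\dots,a_n)$ and $(a_0^{pq},a_1,\dots,a_n)$ lie in the same $E_{n+1}(R)$-orbit; the passage is realised by an explicit product of elementary matrices whose entries are polynomials in the $a_j$ and in a fixed dual row $(b_j)$ with $\sum_j a_jb_j=1$, and hence is valid over an arbitrary commutative ring, with no hypothesis on $\dim R$. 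Applying this repeatedly to absorb each exponent into the first coordinate, one finds that $(a_0^{m_0},\dots,a_n^{m_n})$ is completable if and only if $(a_0^{M},a_1,\dots,a_n)$ is, where $M=\prod_{i=0}^{n}m_i$. Writing $M=n!\,\ell$ and noting that $(a_0^{\ell},a_1,\dots,a_n)$ is again unimodular, it therefore suffices to prove: \emph{for every commutative ring $R$ and every unimodular row $(a_0,\dots,a_n)$ of length $n+1$, the row $(a_0^{\,n!},a_1,\dots,a_n)$ is completable.}

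\medskip

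\noindent I would prove this by induction on $n$, the case $n=1$ being the $SL_2(R)$ matrix above. Since completability involves only finitely many elements of $R$ and a matrix with finitely many entries, I may assume that $R$ is a finitely generated $\Z$-algebra, in particular Noetherian. For the inductive step, work with the two charts $\spec{R[1/a_0]}$ and $\spec{R/(a_0)}$. Over $R[1/a_0]$ the leading entry $a_0^{\,n!}$ is a unit, so $(a_0^{\,n!},a_1,\dots,a_n)$ is completable there by elementary operations (clear the remaining entries and then use $\mathrm{diag}(a_0^{\,n!},a_0^{-n!},1,\dots,1)$). Over $R/(a_0)$ the relation $\sum_{i=0}^{n}a_ib_i=1$ becomes $\sum_{i=1}^{n}\bar a_i\bar b_i=1$, so $(\bar a_1,\dots,\bar a_n)$ is a unimodular row of length $n$, and the induction hypothesis produces $\bar\tau\in SL_n(R/(a_0))$ with first row $(\bar a_1^{\,(n-1)!},\bar a_2,\dots,\bar a_n)$. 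One then glues these two partial completions --- along the $a_0$-adic thickening of $\spec{R/(a_0)}$ and by a Mayer--Vietoris / patching argument for $SL_{n+1}$ over the Noetherian ring $R$ --- into a genuine matrix of $SL_{n+1}(R)$ whose first row is $(a_0^{\,n!},a_1,\dots,a_n)$. The extra factor $n$ in the splitting $n!=n\cdot(n-1)!$ is precisely what makes the two patching data agree: the a priori obstruction is a discrepancy of determinants (transition data) that is absorbed once $a_0$ has been raised to the $n$-th further power, morally because that discrepancy is governed by $\wedge^{n-1}$ of the rank-$n$ module cut out over $R/(a_0)$.

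\medskip

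\noindent The manipulations of the first paragraph and the bookkeeping in the induction are routine once the requisite elementary matrices are written down; the genuine difficulty --- the technical core of Suslin's construction --- is the gluing: producing a completion over $R$ out of the completions over $R[1/a_0]$ and $R/(a_0)$, and isolating exactly why multiplying the exponent by $n$ kills the patching obstruction.
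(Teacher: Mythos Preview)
The paper does not prove this theorem at all: it is quoted in the introduction as Suslin's result, with a bare citation to \cite[Theorem 2]{Suslin77c}, and serves only to motivate Nori's question. So there is no ``paper's own proof'' to compare against. That said, your proposal has genuine gaps, and it also differs from Suslin's actual argument.

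Your first reduction already needs justification. You assert that for $n\ge 2$ the rows $(a_0^{p},a_1,\dots,a_i^{q},\dots,a_n)$ and $(a_0^{pq},a_1,\dots,a_n)$ lie in the same $E_{n+1}(R)$-orbit over an \emph{arbitrary} ring, realised by ``an explicit product of elementary matrices''. Multiplicativity of this kind is a property of Mennicke \emph{symbols}, i.e.\ it holds in a quotient of $Um_{n+1}(R)/E_{n+1}(R)$, and it holds in $Um_{n+1}(R)/E_{n+1}(R)$ itself only under stable-range or dimension hypotheses that make the orbit set a group. Absent such hypotheses I do not know this $E$-equivalence to be true, and you have not written down the matrices. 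If you have a concrete identity in mind you must display it; ``weak Mennicke type'' is not a proof.

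The inductive step is more seriously incomplete. The pair $\spec{R[1/a_0]}$, $\spec{R/(a_0)}$ is an open subscheme together with its closed complement; it is neither a Zariski open cover nor one side of a Milnor square, so no standard descent or patching theorem for $SL_{n+1}$ applies directly. You yourself flag the gluing as ``the genuine difficulty'' and describe the vanishing of the obstruction after raising $a_0$ to the $n$-th further power only as ``moral''. That is an honest assessment, but it means you have located where the content of the theorem lies without supplying it.

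For contrast, Suslin's proof is explicit and constructive. The induction does pass to $R/(a_n)$ to get, by the inductive hypothesis, a matrix $\overline M\in SL_n(R/(a_n))$ completing $(\overline a_0^{(n-1)!},\overline a_1,\dots,\overline a_{n-1})$; one lifts it to $M\in M_n(R)$ with the same first row and with $\det M\equiv 1\pmod{a_n}$. The crucial step is then not abstract patching but a concrete matrix identity (``Suslin's lemma''): from such an $M$ one writes down, by hand, an element of $SL_{n+1}(R)$ whose first row is $(a_0^{n\cdot(n-1)!},a_1,\dots,a_n)$. The extra factor $n$ you were looking for appears because the construction uses the $n\times n$ adjugate of $M$. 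No localisation at $a_0$, no Noetherian reduction, and no gluing are involved.
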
  

If $R$ is an algebra over a field $k$, observe that an element $v\in Um_{n+1}(R)$ corresponds naturally to a morphism of schemes $f:\spec R\to (\A^{n+1}_k-0)$. Now the homomorphism $\phi:k[x_0,\ldots,x_n]\to k[x_0,\ldots,x_n]$ defined by $\phi(x_i)=x_i^{m_i}$ induces a morphism $\varphi:(\A^{n+1}_k-0)\to (\A^{n+1}_k-0)$ and Suslin's result reads as follows: Let $v\in Um_{n+1}(R)$ corresponding to the morphism $f:\spec R\to (\A^{n+1}_k-0)$. Then the row $w$ corresponding to $\varphi f$ is completable. In view of this, M. V. Nori asked the following very natural question (\cite{Mohan}):

\begin{quest*}[M. V. Nori]\label{nori}
Let $R=k[x_0,\ldots,x_n]$ be the polynomial ring in $n+1$ variables, where $k$ is a field. Let $\phi:R\to A$ be a $k$-algebra homomorphism such that $\sum \phi(x_i)A=A$. Let $f_0,\ldots,f_n$ such that $\mathrm{rad}(f_0,\ldots,f_n))=(x_0,\ldots,x_n)$. Assume $l(R/(f_0,\ldots,f_n))$ is divisible by $n!$. Then is the unimodular row $(\phi(f_0),\ldots,\phi(f_n))$ completable?
\end{quest*}

Mohan Kumar proved that this question has a positive answer when the base field is algebraically closed and the polynomials $f_i$ are homogeneous (\cite{Mohan}). The purpose of this article is to show that the answer to Nori's question is negative in general for $n$ odd. More precisely, we prove the following theorem (Theorem \ref{counter} in the text).

\begin{thm*}
Let $k$ be a field such that $\sqrt -1\not\in k$. Consider the unimodular row $(x_1,x_2,x_3)\in Um_3(S_3)$ and the map $g:(\A^3-0)\to (\A^3-0)$ defined by the algebra homomorphism $\varphi:k[x_1,x_2,x_3]\to k[x_1,x_2,x_3]$ given by $\varphi(x_1)=x_1^2-x_2^2$, $\varphi(x_2)=x_1x_2$ and $\varphi(x_3)=x_3$. Then $k[x_1,x_2,x_3]/(x_1^2-x_2^2,x_1x_2,x_3)$ is of length $4$, but $(x_1^2-x_2^2,x_1x_2,x_3)\in Um_3(S_3)$ is not completable.
\end{thm*}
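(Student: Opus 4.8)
The plan is to play the degree map off against completability. The length claim is elementary: writing $\bar x_i$ for the classes in $T:=k[x_1,x_2,x_3]/(x_1^2-x_2^2,x_1x_2,x_3)$, one has $\bar x_3=0$, $\bar x_1\bar x_2=0$ and $\bar x_1^2=\bar x_2^2$, hence $\bar x_1^3=\bar x_1\bar x_2^2=\bar x_2(\bar x_1\bar x_2)=0=\bar x_2^3$; so $1,\bar x_1,\bar x_2,\bar x_1^2$ span $T$, and they form a $k$-basis (a dimension count for the filtration by powers of the maximal ideal $(\bar x_1,\bar x_2)$ gives graded dimensions $1,2,1$), whence $l(T)=4$. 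In particular $l(T)$ is divisible by $2$, so the numerical hypothesis of Nori's question holds and everything hinges on the failure of completability.

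For that I would invoke the preceding sections as follows. The degree of the unimodular row $g\cdot(x_1,x_2,x_3)$ — the row obtained from $(x_1,x_2,x_3)\in Um_3(S_3)$ by composing with $g$ — is an $SL_3(S_3)$-invariant of the row and equals $\deg(g)\in W(k)$; moreover the completable row $e_1=(1,0,0)$, which arises the same way from the constant self-map of $(\A^3-0)$, has degree $0$. Hence if $g\cdot(x_1,x_2,x_3)$ were completable it would lie in the $SL_3(S_3)$-orbit of $e_1$, forcing $\deg(g)=0$; so it suffices to prove $\deg(g)\neq0$ in $W(k)$.

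To compute $\deg(g)$: the homomorphism $\varphi$ extends $g$ to a finite flat morphism $G:\A^3\to\A^3$ of degree $4$, and since $x_1x_2=0$ and $x_1^2=x_2^2$ force $x_1,x_2$ nilpotent while $x_3=0$, the fibre $G^{-1}(0)$ is supported at the origin, with coordinate ring $T$, of dimension $4$ over $k$. By the Scheja--Storch description of the degree established above, $\deg(g)$ is the class in $W(k)$ of the bilinear form $(p,q)\mapsto\eta(pq)$ on $T$, where $\eta$ is dual to the \emph{canonical} socle generator $E$, i.e. the one with $J=(\dim_kT)\cdot E$ for $J$ the class of the Jacobian. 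Here
\[
J=\det\begin{pmatrix} 2x_1 & -2x_2 & 0\\ x_2 & x_1 & 0\\ 0 & 0 & 1\end{pmatrix}=2(x_1^2+x_2^2)=4\bar x_1^2\in T,
\]
so $E=\bar x_1^2$, and in the basis $\{1,\bar x_1,\bar x_2,\bar x_1^2\}$ the Gram matrix of the form is the orthogonal sum of a hyperbolic plane on $\langle 1,\bar x_1^2\rangle$ and of $\langle 1,1\rangle$ on $\langle\bar x_1,\bar x_2\rangle$. Thus the Scheja--Storch form is $\langle1,-1,1,1\rangle$, whence $\deg(g)=\langle1,1\rangle$ in $W(k)$. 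Since $\sqrt{-1}\notin k$, a relation $a^2+b^2=0$ forces $a=b=0$, so $\langle1,1\rangle$ is anisotropic and hence nonzero in $W(k)$; therefore $g\cdot(x_1,x_2,x_3)$ is not completable.

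The main obstacle lies not in this argument but upstream: constructing the degree map and proving that it is an invariant of $SL_n(S_n)$-orbits of unimodular rows which vanishes on completable ones. This rests on Morel's description of the homotopy classes of self-maps of the punctured affine space $(\A^n-0)$ together with the Scheja--Storch description of those classes, and it is also there that the parity of $n$ (responsible for the invariant landing in $W(k)$ rather than in $GW(k)$) enters. Within the present proof the sole point needing care is the normalisation of the socle generator: one must take the canonical $E$ with $J=(\dim_kT)\cdot E$, since rescaling $E$ rescales the form and could change its Witt class.
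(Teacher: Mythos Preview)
Your argument is correct and arrives at the same conclusion, but the computation of $\deg(g)$ follows a genuinely different route from the paper, and you misdescribe what the paper actually establishes upstream.

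The paper computes $\deg(g)=\langle 1,1\rangle$ in Lemma~\ref{degree} by working directly with the symmetric form $\rho$ on $R/M$ (for $R=k[x,y]$, $M=(x^2-y^2,xy)$) coming from the Koszul complex, exhibiting $x^2:R/\mathfrak m\to R/M$ as a sub-Lagrangian, and then performing a sub-Lagrangian reduction to obtain a form $\psi$ on $\mathfrak m/\mathfrak m^2$; an explicit chase through $\mathrm{Ext}^2$ and projective resolutions shows $\psi$ is the identity matrix in the basis $x,y$. Your approach instead appeals to the Eisenbud--Levine/Scheja--Storch description, reading off the form from the Jacobian algebra via the linear functional dual to the canonical socle element. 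This is quicker and more transparent once one has the identification, and your Gram matrix computation is correct.

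However, two points of caution. First, the phrase ``the Scheja--Storch description of the degree established above'' is inaccurate: the paper \emph{defines} $\deg(g)$ as the class of $\rho(\varphi(x_1),\ldots,\varphi(x_n))$ in $W^n_{\{0\}}(\A^n)\simeq W(k)$ and never proves that this agrees with the Scheja--Storch residue form. That identification is true (it is essentially the content of later work of Kass--Wickelgren, building on Scheja--Storch and \cite{EL}), but it is an external input here, not something the paper supplies. Second, the upstream invariant is not built via Morel's $\A^1$-homotopy classification of self-maps of $(\A^n-0)$; the paper constructs $\Phi$ directly in Grothendieck--Witt theory by pulling back the class $\theta(x_1,\ldots,x_n)\in GW^{n-1}_{red}(\A^n-0)$ and identifying it with the Euler class $e(P(a),\chi(a))$ (Proposition~\ref{euler}), which is what gives $SL_n$-invariance for $n$ odd (Corollary~\ref{symbol}). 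So your last paragraph correctly locates the hard work, but misattributes the machinery.
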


Our guess is that Nori's question has an affirmative answer when $n$ is even. We now describe our strategy to prove this result. Consider the affine $k$-algebra $A_n=k[x_1,\ldots,x_n,y_1,\ldots,y_n]/(\sum x_iy_i-1)$ and set $S_n=\spec {A_n}$. There is a natural projection $p:S_n\to (\A^n_k-0)$ with affine fibres. Now it is not hard to see that any unimodular row $f:\spec R\to (\A^n_k-0)$ factors through $S_n$, i.e. there exists a morphism $f^\prime:\spec R\to S_n$ such that $f=pf^\prime$. This is the reason why we call $S_n$ the unimodular affine scheme. To prove a general property of the unimodular rows of length $n$ over any $k$-algebra $R$, it suffices then to prove it over $S_n$.

Suppose for a while that $k=\R$. Let $f:S_n\to (\A^n_{\R}-0)$ be any morphism. Considering only the real closed points, and using the fact that the projection $p:S_n\to (\A^n_k-0)$ has affine fibers, we are left up to homotopy with a polynomial map $f:(\R^n-0)\to (\R^n-0)$. We can consider its (Brouwer) degree, defined as the sum of the signs of the Jacobian of $f$ at all preimages (under $f$) near $0$ of a regular value of $f$ near $0$. Since the Witt group of $\R$ is equal to $\Z$, we can see the degree as an element of $W(\R)$. It is worth noting that the degree of two homotopic maps are the same, and that the degree of the constant map is $0$. Thus a map with a non constant degree cannot be homotopic to a constant map. Translating this rather easy topological situation to an algebraic situation will be our motivation for the rest of the paper. 

We begin with the definition of a symbol $\phi:Um_n(R)\to GW^{n-1}_{red}(R)$, where the latter is a quotient of  the derived Grothendieck-Witt groups of $R$, as defined by Walter (\cite{Wa}, see also \cite{Baba}). This definition requires some knowledge of Grothendieck-Witt groups, and we therefore spend a few sections to remind some basic facts about these groups. The group $GW^{n-1}_{red}(R)$ being homotopy invariant, this symbol factors through the action of the group $E_n(R)$ generated by elementary matrices to give a symbol $\phi:Um_n(R)/E_n(R)\to GW^{n-1}_{red}(R)$ for $n\geq 3$. 

If $n$ is odd, the symbol yields a symbol $\Phi:Um_n(R)/SL_n(R)\to GW^{n-1}_{red}(R)$ while this is not the case for $n$ even. This rather odd fact will probably be explained when the computation of the Grothendieck-Witt groups of $SL_n$ will be fully achieved (we expect a different answer depending on the parity of $n$). We then compute the Grothendieck-Witt groups of $S_n$ to get $GW^{n-1}_{red}(S_n)=W(k)$, the Witt group of the base field. Under this identification $\Phi(p)=\langle 1\rangle$, where $p:S_n\to (\A^{n}_k-0)$ is the projection. 

In Section \ref{degree_map}, we associate to any morphism $f:(\A^n_k-0)\to (\A^n_k-0)$ a degree generalizing the (Brouwer) degree. This degree map take coefficient in the Witt group $W(k)$ of the base field $k$, and we ingeniously denote by $\mathrm{deg}(f)$ the degree of the map $f$. The basic fact here is that $\Phi(f\circ p)=\mathrm{deg}(f)$ in $W(k)$! This is exactly the obstruction we need to exhibit our counter-example to Nori's question.

In view of the example provided by Theorem \ref{counter}, we propose a strengthened conjecture (Question \ref{nori+} in the text) which is equivalent to the original conjecture when the base field $k$ is algebraically closed. 

\begin{quest*}
Let $R$ be a $k$-algebra, $n\in\N$ be odd and let $f:\spec R\to (\A^n-0)$ be a unimodular row. Let $\varphi:k[x_1,\ldots,x_n]\to k[x_1,\ldots,x_n]$ such that $\varphi(\mathfrak m)\subset \mathfrak m$ and that $(n-1)!$ divides the length of $k[x_1,\ldots x_n]/\varphi(\mathfrak m)$. Let $g:(\A^n-0)\to (\A^n-0)$ be the morphism induced by $\varphi$. If the degree $\mathrm{deg}(g)=0$, then the unimodular row $gf:\spec R\to (\A^n-0)$ is completable.
\end{quest*}

\subsection*{Conventions}
Any scheme is of finite type and separated over a field of characteristic different from $2$.

\subsection*{Acknowledgments}
I'm grateful to Stefan Gille for communicating me the correct signs in the definition of the symmetric form on a Koszul complex. It is a pleasure to thank Ravi Rao for bringing the question to my notice and Sarang Sane for useful conversations. I also wish to thank Wilberd van der Kallen for his remarks on a preliminary version of the paper. This work was supported by the Swiss National Science Foundation, grant PAOOP2\_129089. 


\section{Unimodular rows}\label{um}
Let $R$ be a $k$-algebra. A unimodular row of length $n\geq 2$ is a surjective homomorphism $R^n\to R$. We denote by $Um_n(R)$ the set of unimodular rows of length $n$. The group $GL_n(R)$ acts on $Um_n(R)$ by left composition, and so do any subgroup. We say that $v\in Um_n(R)$ is completable if it is the first row of a matrix in $SL_n(R)$. 

Observe that $Um_n(R)=\hom {\spec R}{\A^n-0}$. Let $r:\SL n\to (\A^n-0)$ be the projection on the first row. In this setting, a unimodular row $f:\spec R\to (\A^n-0)$ is completable if $f$ factors through $\SL n$, i.e if there exists a morphism $f^\prime:\spec R\to \SL n$ such that the diagram
$$\xymatrix{\spec R\ar[r]^-{f^\prime}\ar[rd]_-f & \SL n\ar[d]^-r \\
 & (\A^n-0)}$$
commutes.
\subsection{Steinberg symbols}

Let $R$ be a ring. A Steinberg symbol is a pair $(\rho,G)$, where $G$ is a group and $\rho:Um_n(R)\to G$ is a map such that the following relations hold:
\begin{enumerate}[(1)]
\item $\rho(v)=\rho(vE)$ for any $E\in E_n(R)$.
\item $\rho(x,v_2,\ldots,v_n)\rho(1-x,v_2,\ldots,v_n)=\rho(x(1-x),v_2,\ldots,v_n)$ if $(x,v_2,\ldots,v_n)$ and $(1-x,v_2,\ldots,v_n)$ are unimodular. 
\end{enumerate} 

It is clear that a universal Steinberg symbol exists. We denote it by $(St_n(R),st)$. Observe that by definition a Steinberg symbol $\rho:Um_n(R)\to G$ induces a map $\rho:Um_n(R)/E_n(R)\to G$. If $R$ is a ring of Krull dimension $d$ with $2\leq d\leq 2n-4$, then W. van der Kallen proved that the universal Steinberg symbol $(St_n(R),st)$ induces a bijection $st:Um_n(R)/E_n(R)\to St_n(R)$ (\cite[Theorem 3.3]{vdk2}). Moreover, $St_n(R)$ is abelian in this situation (\cite[Remark 3.4]{vdk2}), and therefore $st$ endows $Um_n(R)/E_n(R)$ with the structure of an abelian group.

\subsection{Unimodular rows and projective modules}

For any unimodular element $a=(a_1,\ldots,a_n)\in Um_n(R)$ we denote by $P(a)$ the cokernel of the homomorphism $a^t:R\to R^n$. We have then an exact sequence
$$\xymatrix{0\ar[r] & R\ar[r]^-{a^t} & R^n\ar[r] & P(a)\ar[r] & 0.}$$
Usually, one defines $P(a)$ to be the kernel of $a:R^n\to R$ and it is clear that our definition is the dual of this one.
If $n$ is odd, then $s(a):R^n\to R$ given by the row $(-a_2,a_1,\ldots,-a_{n-1},a_{n-2},0)$ has the property that $s(a)(a^t)=0$. It induces a homomorphism $P(a)\to R$ that we still denote $s(a)$. If $n$ is even, then we can define $s(a)$ by the row $(-a_2,a_1,\ldots,-a_{n},a_{n-1})$ giving a surjective homomorphism $s(a):P(a)\to R$.

The sequence
$$\xymatrix{0\ar[r] & R\ar[r]^-{a^t} & R^n\ar[r] & P(a)\ar[r] & 0}$$
induces an isomorphism $\chi(a):\det P(a)\simeq R\cdot e_1\wedge\ldots\wedge e_n$, where $e_1,\ldots,e_n$ is the usual basis of $R^n$. If $b=(b_1,\ldots,b_n)$ is such that $ba^t=1$ and $f_i$ is the image of $e_i$ in $P(a)$ for $1\leq i\leq n$, a straightforward computation shows that $\chi(a)^{-1}(e_1\wedge\ldots\wedge e_n)=\sum_{i=1}^n(-1)^ib_i\cdot f_1\wedge\ldots\wedge f_{i-1}\wedge f_{i+1}\wedge\ldots\wedge f_n$. We denote by $\omega(a)$ this generator of $\det P(a)$. Since $a_i\omega(a)=(-1)^if_1\wedge\ldots\wedge f_{i-1}\wedge f_{i+1}\wedge\ldots\wedge f_n$ for all $1\leq i\leq n$, we see that $\omega(a)$ is independent of the choice of $b$. Indeed, if $c$ is such that $ca^t=1$, then 
$$\omega(a)=(\sum_{i=1}^n c_ia_i)\omega(a)=\sum_{i=1}^n(-1)^ic_i f_1\wedge\ldots\wedge f_{i-1}\wedge f_{i+1}\wedge\ldots\wedge f_n.$$
%


\section{Grothendieck-Witt groups}\label{gw}

\subsection{The basics}
Let $X$ be a scheme over $\spec k$. For any $j\in\Z$ and any line bundle $L$ over $X$, we denote by $GW^j(X,L)$ the $j$-th Grothendieck-Witt group of the derived category of bounded complexes of coherent locally free $\O_X$-modules, with duality induced by the functor $\homm {\O_X}{\_}L$ (\cite[\S 2]{Wa}).  We denote by $W^j(X,L)$ the Witt groups of this category (\cite[Definition 1.4.3]{Baba}). By definition, there is an exact sequence (\cite[Theorem 2.6]{Wa})
$$\xymatrix{K_0(X)\ar[r] & GW^j(X,L)\ar[r] & W^j(X,L)\ar[r] & 0.}$$
The Grothendieck-Witt groups are contravariant: If $f:Y\to X$ is a morphism of schemes, then there are homomorphisms $f^*:GW^j(X,L)\to GW^j(Y,f^*L)$. In particular, the structural morphism $p:X\to \spec k$ induces a homomorphism $p^*:GW^j(k)\to GW^j(X)$ for any $j\in\Z$. We denote by $GW^j_{red}(X)$ its cokernel (which is a direct factor of $GW^j(X)$ if $X$ has a rational point).

\subsection{Transfers}
Suppose that $i:Y\to X$ is a closed immersion. Then we can consider the category of bounded complexes of coherent locally free $\O_X$-modules whose homology is supported on $Y$. We will denote its Grothendieck-Witt groups by $GW^j_Y(X,L)$. We can sometimes identify the groups with support on $Y$ with the groups associated to $Y$. This procedure is usually called d\'evissage. In the special situation where $X$ and $Y$ are smooth, the d\'evissage can be described as follows: Let $\overline i:(Y,\O_Y)\to (X,i_*\O_Y)$ be the morphism of ringed spaces induced by $i$. Suppose that $i$ is of pure codimension $d$. If $L$ is a line bundle over $X$, then $N:=\overline i^*\exten d{\O_X}{i_*\O_Y}{i^*L}$ is a line bundle over $Y$ and there is a transfer morphism (\cite[Theorem 6.2]{CH})
$$i_*:GW^j(Y,N)\to GW^{j+d}_Y(X,L).$$
At the level of the Witt groups, this homomorphism is in fact an isomorphism (\cite[Theorem 3.2]{Gi3}). This fact is also true for Grothendieck-Witt groups, but we don't use it here. In case $N$ is trivial, $i_*$ becomes an isomorphism
$$i_*:GW^j(Y)\to GW^{j+d}_Y(X,L).$$
Observe that $i_*$ now depends on a trivialization isomorphism $\O_Y\simeq N$.

\subsection{Finite length modules and d\'evissage}\label{finite_length}
Let $R$ be a regular local ring of dimension $d$. Let $\mathcal M_{fl}(R)$ be the (abelian) category of finite length $R$-modules. The functor $M\mapsto \cha M:=\exten dRMR$ endows $\mathcal M_{fl}(R)$ with a duality with canonical isomorphism defined as follows: Let $\pi:\mathcal P\to M$ be a projective resolution of $M$. The canonical identification of $\mathcal P$ with its bidual $\mathcal P^{\vee\vee}$ yields an isomorphism $\eta:M\to \cha{\cha M}$ which is independent of the projective resolution (\cite[Theorem 3.3.2]{Fa1} for instance). We set $\varpi:=(-1)^{d(d-1)/2}\eta$. Having a duality functor and a canonical isomorphism, we can define the Witt group of finite length modules $W^{fl}(R)$ following \cite{QSS}. 

Suppose that $X$ is smooth of dimension $d$ and that $x\in X$ is a closed point. We consider the Witt group $W^d_{x}(X)$. By the above section, we know that there is an isomorphism
$$i_*:W(k(x),\exten d{\O_X}{k(x)}{\O_X})\to W^d_{x}(X).$$ 
We can also interpret this isomorphism using finite length $\O_{X,x}$-modules following \cite[\S 6]{BW}. 

Observe first that localizing at $x$ induces an isomorphism $W^j_x(X)\to W^j_x(\O_{X,x})$ for any $j\in\N$. Now $W^{fl}(\O_{X,x})\simeq W^d_x(\O_{X,x})$ by \cite[Theorem 6.1, Proposition 6.2]{BW}. The map is defined by sending a finite length module (endowed with a form $\phi$) to a projective resolution of this module (endowed with a form induced by $\phi$). Finally, there is a canonical isomorphism $W(k(x),\exten d{\O_X}{k(x)}{\O_X})\to W^{fl}(\O_{X,x})$ (\cite[Appendix E.2]{Fa1}). The isomorphism $i_*$ is the composition
$$\xymatrix@C=1.2em{W(k(x),\exten d{\O_X}{k(x)}{\O_X})\ar[r] &  W^{fl}(\O_{X,x})\ar[r] & W^d_x(\O_{X,x})\ar[r] & W^d_x(X).}$$

\subsection{Euler classes}

Let $X$ be a scheme and $\mathcal E$ be a rank $n$ coherent locally free $\O_X$-module over $X$. Let $s:\mathcal E\to \O_X$ be any section (possibly trivial). Then we can consider the Koszul complex $Kos(s)$ associated to $s$. For any $1\leq i\leq n$, we have isomorphisms $\varphi_i:\wedge^i\mathcal E\to \homm {\O_X}{\wedge^{n-i}\mathcal E}{\det \mathcal E}$ defined by $\varphi_i(p)(q)=p\wedge q$. Let 
$$\rho(s):Kos(s)\to T^n\homm {\O_X}{Kos(s)}{\det \mathcal E}$$
be the isomorphism defined in degree $i$ by $\rho(s)_i=(-1)^{in+i(i-1)/2+n(n-1)/2}\varphi_i$ (note that the $\rho_i$ given in \cite[Remark 4.2]{BG} do not define a morphism of complexes). It turns out that $\rho(s)$ is symmetric for the $n$-th shifted duality, and therefore it defines an element in $GW^n(X,\det \mathcal E)$.

\begin{lem}
Suppose that $X$ is regular. let $s:\mathcal E\to \O_X$ and $s^\prime:\mathcal E\to \O_X$ be two sections. Then $\rho(s)=\rho(s^\prime)$ in $GW^n(X,\det \mathcal E)$. 
\end{lem}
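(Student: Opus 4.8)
The plan is to interpolate between the two sections $s$ and $s'$ over the polynomial ring $X[t]$ and then use homotopy invariance of Grothendieck-Witt groups. First I would consider the scheme $X\times\A^1$ with projection $q:X\times\A^1\to X$, the pulled-back bundle $q^*\mathcal E$, and the section $\sigma:=(1-t)q^*s+t\,q^*s':q^*\mathcal E\to\O_{X\times\A^1}$, where $t$ is the coordinate on $\A^1$. Since $X$ is regular, $X\times\A^1$ is regular as well, so the construction preceding the lemma applies: the Koszul complex $Kos(\sigma)$ carries the symmetric isomorphism $\rho(\sigma):Kos(\sigma)\to T^n\homm{\O_{X\times\A^1}}{Kos(\sigma)}{\det q^*\mathcal E}$ for the $n$-th shifted duality, and hence defines a class in $GW^n(X\times\A^1,\det q^*\mathcal E)$. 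Here I use that $\det q^*\mathcal E\simeq q^*\det\mathcal E$ canonically.

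Next I would invoke homotopy invariance of the Grothendieck-Witt groups of a regular scheme: the pullback $q^*:GW^n(X,\det\mathcal E)\to GW^n(X\times\A^1,q^*\det\mathcal E)$ is an isomorphism. This is the higher Grothendieck-Witt analogue of the corresponding statement for $K_0$ and Witt groups, valid because $X$ is regular; it follows from the results of Walter (\cite{Wa}) cited in Section~\ref{gw}. Now let $i_0,i_1:X\to X\times\A^1$ be the two closed immersions given by $t=0$ and $t=1$. Both satisfy $q\circ i_\epsilon=\mathrm{id}_X$, so $i_0^*=i_1^*=(q^*)^{-1}$ as maps $GW^n(X\times\A^1,q^*\det\mathcal E)\to GW^n(X,\det\mathcal E)$. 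In particular $i_0^*$ and $i_1^*$ agree on the class of $\rho(\sigma)$.

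It therefore remains to identify the pullbacks. Restricting along $i_0$ sends $q^*\mathcal E$ to $\mathcal E$ and $\sigma$ to $s$, and one checks that the formation of the Koszul complex and of the symmetric form $\rho(-)$ is compatible with base change along $i_0$ — the isomorphisms $\varphi_i$ and the signs $(-1)^{in+i(i-1)/2+n(n-1)/2}$ are defined functorially — so $i_0^*[\rho(\sigma)]=[\rho(s)]$ in $GW^n(X,\det\mathcal E)$; likewise $i_1^*[\rho(\sigma)]=[\rho(s')]$. Combining with the previous paragraph gives $[\rho(s)]=[\rho(s')]$, as desired.

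The main obstacle I anticipate is purely bookkeeping rather than conceptual: verifying that $\rho(\sigma)$ genuinely defines a morphism of complexes which is symmetric for the $n$-th shifted duality over $X\times\A^1$ (the parenthetical warning in the text that the signs of \cite[Remark 4.2]{BG} are wrong suggests this is delicate), and that all the identifications $\det q^*\mathcal E\simeq q^*\det\mathcal E$, $i_\epsilon^*q^*\mathcal E\simeq\mathcal E$, and the base-change of $\rho$ are strictly compatible so that the classes match on the nose and not merely up to a unit. Once the sign conventions from the construction preceding the lemma are in place over the one-parameter family, these compatibilities are automatic, so no genuinely new difficulty arises beyond the homotopy-invariance input, which is quoted.
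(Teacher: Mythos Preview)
Your proposal is correct and follows essentially the same argument as the paper: interpolate between the two sections via $(1-t)q^*s + t\,q^*s'$ on $X\times\A^1$, invoke homotopy invariance of $GW^n$ for regular schemes to see that $q^*$ is an isomorphism (hence $i_0^*=i_1^*$), and restrict to $t=0,1$. The only cosmetic difference is that the paper cites \cite[Proposition~1.1]{Fa4} rather than \cite{Wa} for homotopy invariance.
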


\begin{proof}
Let $p:X\times \A^1\to X$ be the projection. We consider the section 
$$(tp^*s+(1-t)p^*s^\prime):p^*\mathcal E\to \O_{X\times\A^1}.$$
Since $X$ is regular, $p^*:GW^n(X,\det \mathcal E)\to GW^n(X\times \A^1,p^*\det \mathcal E)$ is an isomorphism (\cite[Proposition 1.1]{Fa4}). There exists then $\alpha\in GW^n(X,\det \mathcal E)$ such that $p^*\alpha=\rho(tp^*s+(1-t)p^*s^\prime)$. Evaluating at $t=0$ and $t=1$, we get $\alpha=\rho(s)=\rho(s^\prime)$.
\end{proof}

\begin{defin}
We call Euler class of $\mathcal E$ the element $\rho(s)$ in $GW^n(X,\det \mathcal E)$ for any section $s:\mathcal E\to \O_X$. We denote it by $e(\mathcal E)$.
\end{defin}

If $f:Y\to X$ is a morphism of regular schemes, then it is clear from the definition that $e(f^*\mathcal E)=f^*e(\mathcal E)$.

Let $g:\mathcal E\to \mathcal E$ be an isomorphism such that $\det f=1$ and let $p:E\to X$ be the total space of $\mathcal E$. If we also denote by $g:E\to E$ the morphism induced by $g$, then we see from the definition of the Euler class that $g^*e(p^*\mathcal E)=e(p^*\mathcal E)$. It follows that the Euler class is invariant under the action of $SL(\mathcal E)$.

Suppose next that $\chi:\det \mathcal E\to \O_X$ is an isomorphism. It induces an isomorphism $GW^n(X,\det \mathcal E)\to GW^n(X)$. We denote by $e(\mathcal E,\chi)$ the image of $e(\mathcal E)$ under this isomorphism. It depends of course on the isomorphism $\chi$, but it is still invariant under the action of $SL(\mathcal E)$.

\subsection{Some computations}

Let $X$ be a scheme. If $x\in\O_X(X)$ is a global section, we denote by $Z(x)$ the vanishing locus of $X$, which is a closed subset in $X$ (since $X$ is noetherian by our conventions). 

Let $(a_1,\ldots,a_n)\in \O_X(X)$ and let $Kos(a_1,\ldots,a_n)$ be the Koszul complex associated to the (non necessarily regular) sequence of global sections $(a_1,\ldots,a_n)$. As in the previous section, this complex is endowed with a symmetric isomorphism
$$\rho(a_1,\ldots,a_n):Kos(a_1,\ldots,a_n)\to T^{n}Kos(a_1,\ldots,a_n)^\vee.$$
Let $Z:=\cap_{i=1}^n Z(a_i)$ be the closed subset of $X$ where the sections all vanish. Since the homology of $Kos(a_1,\ldots,a_n)$ is concentrated on $Z$, the above symmetric isomorphism defines an element in $GW^n_Z(X)$ (and $W^n_Z(X)$) that we still denote by $\rho(a_1,\ldots,a_n)$. 

We can also consider the form $\rho(a_1,\ldots,a_{n-1})$ on $Kos(a_1,\ldots,a_{n-1})$ whose homology is supported on $Z^\prime:=\cap_{i=1}^{n-1}Z(a_i)$. If $U=X-Z$, then 
$$(Z(a_n)\cap U)\cap (Z^\prime\cap U)=\emptyset.$$
It follows that  
$$a_n\rho(a_1,\ldots,a_{n-1}):Kos(a_1,\ldots,a_n)\to T^{n}Kos(a_1,\ldots,a_n)^\vee$$
is an isomorphism on $U$, and we denote its class in $GW^{n-1}(U)$ by $\theta(a_1,\ldots,a_n)$. The following result is a straightforward consequence of \cite[Remark 4.2]{BG} and the Leibnitz formula for symmetric isomorphisms (\cite[Theorem 5.2]{Ba2}):

\begin{lem}\label{connecting}
Let $\partial:GW^{n-1}(U)\to W^n_{Z}(X)$ be the connecting homomorphism in the localization sequence associated to $U\subset X$. Then $\partial(\theta(a_1,\ldots,a_n))=\rho(a_1,\ldots,a_n)$.
\end{lem}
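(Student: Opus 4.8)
The plan is to unwind the definitions on both sides and match them through the identifications recalled in Section~\ref{gw}. The connecting homomorphism $\partial:GW^{n-1}(U)\to W^n_Z(X)$ comes from the localization sequence of the derived category of complexes of locally free $\O_X$-modules with the $n$-th shifted duality; on representatives it is computed by the standard cone construction. So I would start with an explicit complex $P_\bullet$ on $U$ representing $\theta(a_1,\ldots,a_n)$, namely $Kos(a_1,\ldots,a_n)$ together with the isomorphism $a_n\rho(a_1,\ldots,a_{n-1})$, and then compute $\partial$ of its class directly: choose a lift of $P_\bullet$ to a complex on $X$ (here $Kos(a_1,\ldots,a_n)$ itself works, since it is already defined over $X$), lift the form to a (possibly non-iso) symmetric morphism on $X$ (here $a_n\rho(a_1,\ldots,a_{n-1})$ again, now only a quasi-isomorphism away from $Z$), form the algebraic mapping cone of this morphism, and observe that this cone carries the induced symmetric form whose class in $W^n_Z(X)$ is, by Balmer's description of $\partial$, exactly the image of $\theta(a_1,\ldots,a_n)$.

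The key computational step is then to identify that cone. Here is where \cite[Theorem~5.2]{Ba2} (the Leibniz/multiplicativity formula for symmetric isomorphisms) and \cite[Remark~4.2]{BG} enter: the Koszul complex $Kos(a_1,\ldots,a_n)$ is the tensor product $Kos(a_1,\ldots,a_{n-1})\otimes Kos(a_n)$, and correspondingly $\rho(a_1,\ldots,a_n)$ is, up to the sign bookkeeping spelled out in the definition of $\rho$ above, the tensor product of $\rho(a_1,\ldots,a_{n-1})$ with the rank-one form on $Kos(a_n)$. The morphism $a_n\rho(a_1,\ldots,a_{n-1})$ is $\rho(a_1,\ldots,a_{n-1})$ tensored with multiplication by $a_n$ on $Kos(a_n)=(R\xrightarrow{a_n}R)$ viewed in the appropriate degree; its mapping cone is canonically identified — again a Koszul-type computation — with $Kos(a_1,\ldots,a_n)$ equipped with the form $\rho(a_1,\ldots,a_n)$, as an object of $W^n_Z(X)$. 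Matching the signs is the delicate part: one must check that the sign normalization $\rho(s)_i=(-1)^{in+i(i-1)/2+n(n-1)/2}\varphi_i$ chosen in the paper is precisely the one that makes the cone of $a_n\rho(a_1,\ldots,a_{n-1})$ equal to $\rho(a_1,\ldots,a_n)$ on the nose rather than up to a global sign. This is exactly the sign that \cite[Remark~4.2]{BG} gets wrong, which is why the statement is phrased as ``a straightforward consequence'' with that caveat.

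The main obstacle I anticipate is purely bookkeeping: reconciling the shift conventions ($T^n$ versus $T^{n-1}$ on the two sides of $\partial$), the duality twist, and the Koszul sign conventions so that the induced form on the cone is literally $\rho(a_1,\ldots,a_n)$. Once the cone is correctly identified with $(Kos(a_1,\ldots,a_n),\rho(a_1,\ldots,a_n))$ supported on $Z=\cap Z(a_i)$, the conclusion $\partial(\theta(a_1,\ldots,a_n))=\rho(a_1,\ldots,a_n)$ in $W^n_Z(X)$ is immediate. I would therefore organize the proof as: (1) recall Balmer's cone formula for the connecting map; (2) exhibit the lift of $(Kos(a_1,\ldots,a_n),a_n\rho(a_1,\ldots,a_{n-1}))$ to $X$; (3) compute its cone via the tensor-product/Leibniz formula, carefully tracking signs against the definition of $\rho$; (4) conclude. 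Since the excerpt explicitly invites us to assume \cite[Remark~4.2]{BG} and \cite[Theorem~5.2]{Ba2}, steps (3) and (4) can be kept brief, with the sign check being the one point deserving a genuine (if short) verification.
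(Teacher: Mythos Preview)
Your proposal is correct and follows precisely the approach the paper intends: the paper gives no detailed argument but simply asserts the lemma as a straightforward consequence of \cite[Remark~4.2]{BG} and the Leibniz formula \cite[Theorem~5.2]{Ba2}, which is exactly the cone/tensor-product computation you outline. Your expansion of steps (1)--(4), with the sign bookkeeping flagged as the only genuine verification, is a faithful unpacking of what those two citations encode.
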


\begin{lem}\label{coefficient}
For any $n\in\N$, we have $GW^{n-1}_{red}(\A^n-0)=W(k)\cdot \theta(x_1,\ldots,x_n)$.
\end{lem}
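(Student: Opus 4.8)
The plan is to run the localization long exact sequence for the open immersion $U := \A^n - 0 \hookrightarrow X := \A^n$ with closed complement $Z := \{0\}$, and combine it with three facts that are already in place: homotopy invariance of Grothendieck-Witt groups, the d\'evissage computation of $W^n_Z(\A^n)$ from Section~\ref{finite_length}, and Lemma~\ref{connecting}. Concretely, I would use the localization sequence containing the connecting map $\partial$ of Lemma~\ref{connecting}, namely $GW^{n-1}(\A^n)\to GW^{n-1}(U)\xrightarrow{\partial} W^n_Z(\A^n)\to W^n(\A^n)$.

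First I would kill the ambient term: since $\A^n$ is smooth, the structural pullback $GW^{n-1}(k)\to GW^{n-1}(\A^n)$ is an isomorphism (\cite[Proposition 1.1]{Fa4}), so $GW^{n-1}_{red}(\A^n)=0$ and, more usefully, the image of $GW^{n-1}(\A^n)$ in $GW^{n-1}(U)$ equals the image of $GW^{n-1}(k)$; hence $GW^{n-1}_{red}(U)$ is exactly the cokernel of $GW^{n-1}(\A^n)\to GW^{n-1}(U)$. Exactness of the localization sequence at $GW^{n-1}(U)$ then shows that $\partial$ induces an injection $GW^{n-1}_{red}(U)\hookrightarrow W^n_Z(\A^n)$ whose image is $\ker\bigl(i_*\colon W^n_Z(\A^n)\to W^n(\A^n)\bigr)$.

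Next I would identify the target and show $i_*$ vanishes. By the d\'evissage/transfer isomorphism of Section~\ref{finite_length}, applied at the $k$-rational point $0$ of codimension $n$ (for which $\exten n{\O_{\A^n}}{k(0)}{\O_{\A^n}}$ is canonically trivial), one gets $W^n_Z(\A^n)\cong W(k)$, and the Koszul class $\rho(x_1,\ldots,x_n)$ is the transfer of $\langle 1\rangle\in W(k(0))=W(k)$, hence a $W(k)$-module generator (this is the content of \cite{BW}, \cite{Fa1}). The image of $\rho(x_1,\ldots,x_n)$ under $i_*$ is the class of $(Kos(x_1,\ldots,x_n),\rho(x_1,\ldots,x_n))$ in $W^n(\A^n)$, i.e. the Euler class $e(\O_{\A^n}^n)$ for the canonical trivialization of the determinant; computing it with the nowhere-vanishing section $(1,0,\ldots,0)$, whose Koszul complex is acyclic and therefore zero in the derived category (exactly as in the Euler-class Lemma above), gives $e(\O_{\A^n}^n)=0$. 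Since $i_*$ is $W(k)$-linear and $W^n_Z(\A^n)$ is generated by $\rho(x_1,\ldots,x_n)$, we conclude $i_*=0$, so $\partial$ is an isomorphism $GW^{n-1}_{red}(\A^n-0)\xrightarrow{\sim} W^n_Z(\A^n)$.

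Finally, Lemma~\ref{connecting} gives $\partial(\theta(x_1,\ldots,x_n))=\rho(x_1,\ldots,x_n)$. As $\partial$ is $W(k)$-linear, it carries $W(k)\cdot\theta(x_1,\ldots,x_n)$ onto $W(k)\cdot\rho(x_1,\ldots,x_n)=W^n_Z(\A^n)=\partial\bigl(GW^{n-1}_{red}(\A^n-0)\bigr)$; since $\partial$ is injective, $GW^{n-1}_{red}(\A^n-0)=W(k)\cdot\theta(x_1,\ldots,x_n)$, as claimed. I expect the main obstacle to be not the diagram chase but the two hard inputs it rests on: verifying that the relevant stretch of the Grothendieck-Witt localization sequence takes the mixed form used above, so that the cokernel lands in a Witt group one can attack by d\'evissage, and correctly matching the Koszul form $\rho(x_1,\ldots,x_n)$ with the standard generator $\langle 1\rangle$ of $W^n_Z(\A^n)\cong W(k)$ --- the sign bookkeeping here being precisely the point flagged in the acknowledgments.
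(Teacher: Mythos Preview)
Your argument is correct and follows the same localization-sequence strategy as the paper: homotopy invariance to identify $GW^{n-1}(\A^n)$ with $GW^{n-1}(k)$, d\'evissage for $W^n_{\{0\}}(\A^n)$, and Lemma~\ref{connecting} to match the generators $\theta$ and $\rho$. The one point of divergence is how you show that $\partial$ surjects onto $W^n_{\{0\}}(\A^n)$. The paper simply observes that a rational point of $\A^n-0$ splits the restriction maps from $\A^n$, so the localization sequence collapses to a split short exact sequence $0\to GW^{n-1}(k)\to GW^{n-1}(\A^n-0)\to W^n_{\{0\}}(\A^n)\to 0$; you instead compute directly that the extension-of-support map $i_*:W^n_{\{0\}}(\A^n)\to W^n(\A^n)$ vanishes, by recognizing $i_*\rho(x_1,\ldots,x_n)$ as the (Witt-group image of the) Euler class of the trivial bundle $\O_{\A^n}^n$, which is zero since the nowhere-vanishing section $(1,0,\ldots,0)$ has acyclic Koszul complex. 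Both arguments are fine; the paper's is a one-liner, while yours makes the surjectivity explicit and avoids relying on the splitting at the next stage of the sequence.
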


\begin{proof}
The long exact sequence of localization reads as
$$\xymatrix@C=0.9em{\ldots\ar[r] & GW_{\{0\}}^{n-1}(\A^n)\ar[r] & GW^{n-1}(\A^n)\ar[r] & GW^{n-1}(\A^n-0)\ar[r]^-\partial & W^n_{\{0\}}(\A^n)\ar[r] & \ldots       }$$
By homotopy invariance of Grothendieck-Witt groups (\cite[Proposition 1.1]{Fa4}), we see that $GW^{n-1}(\A^n)=GW^{n-1}(k)$. Using this, we see that the choice of a rational point in $\A^n-0$ gives a splitting of the map $GW^{n-1}(\A^n)\to GW^{n-1}(\A^n-0)$. We then get a split exact sequence
$$\xymatrix{0\ar[r] & GW^{n-1}(k)\ar[r] & GW^{n-1}(\A^n-0)\ar[r]^-\partial & W^n_{\{0\}}(\A^n)\ar[r] & 0}$$
showing that $GW^{n-1}_{red}(\A^n-0)\simeq W^n_{\{0\}}(\A^n)$.

By d\'evissage, $W^n_{\{0\}}(\A^n)=W(k)\cdot \rho(x_1,\ldots,x_n)$. We can use Lemma \ref{connecting} to conclude.
\end{proof}

\begin{rem}\label{orientation}
Let $R=k[x_1,\ldots,x_n]$ and $\mathfrak m=(x_1,\ldots,x_n)$. Observe that the above d\'evissage isomorphism $W^n_{\{0\}}(\A^n)\simeq W(k)$ amounts to choose a generator of $\exten nR{R/\mathfrak m}R$. When we write $W^n_{\{0\}}(\A^n)=W(k)\cdot \rho(x_1,\ldots,x_n)$, we implicitly choose the Koszul complex associated to the sequence $(x_1,\ldots,x_n)$ as a generator.
\end{rem}

\begin{rem}\label{action}
The action of $W(k)$ on $\theta(x_1,\ldots,x_n)$ reads as follows: If $\alpha\in k^\times$, then $\langle \alpha\rangle\cdot \theta(x_1,\ldots,x_n)=\theta(x_1,\ldots,x_{n-1},\alpha x_n)$. 
\end{rem}

We will need the following lemma in the next section:

\begin{lem}\label{transfer}
For any $n\geq 1$, Let $j:\A^{n-1}\to \A^n$ be the inclusion obtained by setting $x_n=0$. Let $j^\prime:(\A^{n-1}-0)\to (\A^n-0)$ be the induced closed immersion. Then the transfer homomorphism
$(j^\prime)_*:GW^{n-2}(\A^{n-1}-0)\to GW^{n-1}(\A^n-0)$
induces an isomorphism
$$(j^\prime)_*:GW^{n-2}_{red}(\A^{n-1}-0)\to GW^{n-1}_{red}(\A^n-0)$$
\end{lem}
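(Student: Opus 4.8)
The plan is to reduce the statement, via Lemma \ref{coefficient}, to the essentially formal fact that the d\'evissage transfer $j_*: W^{n-1}_{\{0\}}(\A^{n-1})\to W^n_{\{0\}}(\A^n)$ attached to the closed immersion $j$ is an isomorphism. Recall from the proof of Lemma \ref{coefficient} that for every $m\geq 1$ the boundary map of the localization sequence restricts to an isomorphism $\partial: GW^{m-1}_{red}(\A^m-0)\xrightarrow{\sim}W^m_{\{0\}}(\A^m)$. So I would fit $(j^\prime)_*$ and $j_*$ into a commutative square
$$\xymatrix{GW^{n-2}(\A^{n-1}-0)\ar[r]^-{\partial}\ar[d]_-{(j^\prime)_*} & W^{n-1}_{\{0\}}(\A^{n-1})\ar[d]^-{j_*}\\ GW^{n-1}(\A^n-0)\ar[r]^-{\partial} & W^n_{\{0\}}(\A^n)}$$
and read off the conclusion: on the reduced parts the horizontal arrows are isomorphisms, so $(j^\prime)_*$ induces an isomorphism on reduced groups if and only if $j_*$ does.

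Two things make this square do its job. First, one must check that $(j^\prime)_*$ carries its image into $GW^{n-1}_{red}(\A^n-0)$. I would use the rational point $\epsilon=(0,\ldots,0,1)\in\A^n-0$, which does not lie on the closed subscheme $j^\prime(\A^{n-1}-0)=Z(x_n)\cap(\A^n-0)$. Since the pull-back of a class with support $Z$ to a subscheme disjoint from $Z$ vanishes, $\epsilon^*\circ(j^\prime)_*=0$; as $\epsilon$ is a section of the structure morphism $\A^n-0\to\spec k$, the reduced group $GW^{n-1}_{red}(\A^n-0)$ identifies with the direct summand $\ker(\epsilon^*)$, so the image of $(j^\prime)_*$ lies in it, and in particular $(j^\prime)_*$ restricts to a map $GW^{n-2}_{red}(\A^{n-1}-0)\to GW^{n-1}_{red}(\A^n-0)$. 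Second, the square commutes up to the sign built into the convention for $\partial$: this is the standard compatibility of transfers with localization sequences, which applies because $j$ carries the pair $(\A^{n-1},\A^{n-1}-0)$ to the pair $(\A^n,\A^n-0)$, inducing the identity of $\{0\}$ on the closed complements.

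It remains to see that $j_*: W^{n-1}_{\{0\}}(\A^{n-1})\to W^n_{\{0\}}(\A^n)$ is an isomorphism. Writing $\iota_m:\{0\}\hookrightarrow\A^m$ for the inclusion of the origin, d\'evissage (\S\ref{finite_length}) says that the transfers $(\iota_{n-1})_*: W(k)\to W^{n-1}_{\{0\}}(\A^{n-1})$ and $(\iota_n)_*: W(k)\to W^n_{\{0\}}(\A^n)$ are isomorphisms; since $\iota_n=j\circ\iota_{n-1}$ and transfers are functorial, $(\iota_n)_*=j_*\circ(\iota_{n-1})_*$, whence $j_*$ is an isomorphism. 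Concretely $j_*$ sends $\rho(x_1,\ldots,x_{n-1})$ to $\rho(x_1,\ldots,x_n)$ up to a unit of $W(k)$, as follows from the identity $Kos(x_1,\ldots,x_n)\cong Kos(x_1,\ldots,x_{n-1})\otimes Kos(x_n)$ together with Lemmas \ref{connecting} and \ref{coefficient}. Combining the three steps proves the lemma.

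The only point that requires genuine care is the bookkeeping in this last step: comparing the chosen trivialisations of the dualizing sheaves with the trivialisation of the conormal bundle of $j$ (generated by $dx_n$), and pinning down the sign in the commutativity of the square involving $\partial$. Since in the end we only need the resulting scalar to be a unit of $W(k)$, none of this affects the conclusion.
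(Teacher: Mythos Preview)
Your proof is correct and follows essentially the same approach as the paper: both reduce the statement to the d\'evissage isomorphism $j_*:W^{n-1}_{\{0\}}(\A^{n-1})\to W^n_{\{0\}}(\A^n)$ via the compatibility of transfers with the localization boundary map $\partial$. The paper packages this as a single three-column commutative ladder between the two localization sequences (so the left square implicitly handles why $(j')_*$ descends to reduced groups), whereas you isolate the rightmost square and treat the ``lands in the reduced part'' issue separately via the rational point $\epsilon$; but the substance is the same.
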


\begin{proof}
The transfer homomorphism associated to $j$ induces a commutative diagram
$$\xymatrix@C=1.5em{0\ar[r] & GW^{n-2}(\A^{n-1})\ar[r]\ar[d]^-{j_*} & GW^{n-2}(\A^{n-1}-0)\ar[r]^-\partial\ar[d]^-{j^\prime_*} & W^{n-1}_{\{0\}}(\A^{n-1})\ar[r]\ar[d]^-{j_*} & 0\\
0\ar[r] & GW^{n-1}(\A^{n})\ar[r] & GW^{n-1}(\A^{n}-0)\ar[r]^-\partial & W^{n}_{\{0\}}(\A^{n})\ar[r] & 0}$$
where the lines are the exact sequence of localization obtained in the proof of the above lemma. By d\'evissage, the homomorphism $j_*$ on the right is an isomorphism. 
\end{proof}


\section{The symbol $\Phi$}\label{symbolic}

Let $R$ be a $k$-algebra. Any unimodular element $(a_1,\ldots,a_n)$ yields a morphism $f:\spec R\to (\A^n-0)$. Using the above section, we get a map
$$\phi:Um_n(R)\to \GW {n-1}R$$
defined by $\phi(f)=f^*(\theta(x_1,\ldots,x_n))$.

\begin{lem}\label{wms}
Suppose that $R$ is regular. Then the map $\phi:Um_n(R)\to \GW {n-1}R$ is a Steinberg symbol.
\end{lem}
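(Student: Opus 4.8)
The plan is to verify the two defining relations of a Steinberg symbol for the map $\phi(f) = f^*(\theta(x_1,\ldots,x_n))$.

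\textbf{Relation (1): invariance under $E_n(R)$.} An elementary matrix $E \in E_n(R)$ acts on $Um_n(R)$ and correspondingly induces an automorphism of $\A^n - 0$; concretely, if $f$ corresponds to $(a_1,\ldots,a_n)$ then $fE$ corresponds to the composite of $f$ with the linear automorphism $\sigma_E$ of $\A^n$ restricting to $\A^n - 0$. Since $E_n(R)$ is generated by elementary matrices, and each such matrix has determinant $1$, the automorphism $\sigma_E$ lies in $SL_n$ and is moreover connected to the identity through a polynomial family (each elementary generator $e_{ij}(r)$ is connected to the identity via $t \mapsto e_{ij}(tr)$). First I would use the homotopy invariance of $GW^{n-1}$ for regular $R$ (\cite[Proposition 1.1]{Fa4}) exactly as in the proof of the Euler class lemma above: the family $e_{ij}(tr)$ gives a morphism $\spec R \times \A^1 \to \A^n - 0$ whose pullback of $\theta(x_1,\ldots,x_n)$, being pulled back from $\spec R$ by homotopy invariance, must agree at $t=0$ and $t=1$. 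Hence $\phi(fE) = \phi(f)$ for $E$ elementary, and the general case follows since $E_n(R)$ is generated by such.

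\textbf{Relation (2): the Steinberg-type identity.} Given $(x, v_2, \ldots, v_n)$ and $(1-x, v_2, \ldots, v_n)$ both unimodular, I want
$$\phi(x, v_2, \ldots, v_n) \cdot \phi(1-x, v_2, \ldots, v_n) = \phi(x(1-x), v_2, \ldots, v_n)$$
in $GW^{n-1}_{red}(R)$, where the product is the group operation. The key is that $\phi(f) = f^*\theta(x_1,\ldots,x_n)$ and, by Remark \ref{action} together with Lemma \ref{connecting}, the class $\theta(a_1,\ldots,a_n)$ depends on the first coordinate essentially through the Koszul-form construction: multiplying sections together corresponds, via the Leibnitz formula for symmetric isomorphisms (\cite[Theorem 5.2]{Ba2}) applied to $Kos(a_1)$ versus $Kos(ab_1) $ in the first slot, to the additive structure on $GW^{n-1}_{red}$. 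More precisely, I would compare the three Koszul complexes obtained by putting $x$, $1-x$, and $x(1-x)$ in the first entry over the open set where the remaining $(v_2,\ldots,v_n)$ generate the unit ideal together with the first entry; over $\spec R$ itself the rows $(x,v_2,\ldots,v_n)$ and $(1-x,v_2,\ldots,v_n)$ are both unimodular, so both $\theta$'s and the product $\theta$ are defined, and the identity $x + (1-x) = 1$ lets one build an explicit isomorphism of the form $Kos(x) \perp Kos(1-x)$-type data with $Kos(x(1-x))$-data (this is the standard mechanism behind Steinberg symbols on $K$-theory and its Witt-theoretic refinement). Lemma \ref{connecting} reduces the verification to the level of $W^n_Z(X)$-classes $\rho$, where the Koszul complex is entirely explicit.

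\textbf{Main obstacle.} I expect the hard part to be relation (2): producing the explicit symmetric isomorphism witnessing $\rho(x) \perp \rho(1-x) \cong \rho(x(1-x))$ in the first Koszul factor and checking it is compatible with all the signs in $\rho(s)_i = (-1)^{in + i(i-1)/2 + n(n-1)/2}\varphi_i$ and with the shifted-duality structure. The parenthetical warnings in the excerpt (about the signs in \cite[Remark 4.2]{BG} not defining a morphism of complexes) signal that the bookkeeping here is delicate. Relation (1) should be routine given homotopy invariance. I would organize the proof so that the sign-chasing is localized to a single lemma about the behavior of $\rho(\cdot)$ under replacing $a_1$ by $a_1 b_1$, invoking the Leibnitz formula of \cite[Theorem 5.2]{Ba2} to handle the multiplicativity abstractly rather than by hand.
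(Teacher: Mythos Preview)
Your treatment of relation~(1) is fine and matches the paper's: homotopy invariance of $GW^{n-1}_{red}$ together with the obvious $\A^1$-path through elementary matrices does the job (the paper phrases this via ``elementarily homotopic'' rows and a citation, but the content is the same).

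For relation~(2) there is a genuine gap. First, the identity you write, ``$\rho(x)\perp\rho(1-x)\cong\rho(x(1-x))$'', cannot hold: the left side has twice the rank of the right side. What one actually needs is an isometry of \emph{orthogonal sums}
\[
x\,\rho(a)\;\perp\;(1-x)\,\rho(a)\;\cong\;\rho(a)\;\perp\;x(1-x)\,\rho(a)
\]
on $Kos(a)\oplus Kos(a)$, where $a=(v_2,\ldots,v_n)$ and the scalar in front denotes scaling the symmetric form; one then uses that $\rho(a)=\phi(a,1)=0$ in the \emph{reduced} group. The paper obtains this isometry by a direct, elementary computation: the automorphism $\alpha=\begin{pmatrix}1 & x-1\\ 1 & x\end{pmatrix}$ of $Kos(a)\oplus Kos(a)$ satisfies $(T^{n-1}\alpha^\vee)\bigl(x\rho(a)\perp(1-x)\rho(a)\bigr)\alpha=\rho(a)\perp x(1-x)\rho(a)$. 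No Leibnitz formula or support-theoretic reduction is needed; it is a two-line matrix identity.

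Your proposed detour through Lemma~\ref{connecting} and $W^n_Z$ does not close the argument for general regular $R$: the connecting map $\partial$ is only known to be an isomorphism on the reduced group in the special case $X=\A^n$, $Z=\{0\}$ (Lemma~\ref{coefficient}); for arbitrary $R$ an equality after $\partial$ says nothing about equality in $GW^{n-1}_{red}(R)$. Likewise, invoking the Leibnitz rule abstractly does not produce the required isometry without eventually writing down the $2\times2$ matrix above. The ``hard part'' you anticipated is in fact the easy part once one sees the correct target isometry; I would replace your relation~(2) sketch with the explicit $\alpha$ and the observation $\phi(a,1)=0$.
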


\begin{proof}
Let $v$ and $w$ be unimodular rows. We say that $v$ and $w$ are elementarily homotopic if there is a morphism $F:\spec R\times \A^1\to (\A^n-0)$ such that $F(0)=v$ and $F(1)=w$. Since $\GW {n-1}R$ is homotopy invariant, it is clear that $\phi(v)=\phi(w)$ if $v$ and $w$ are elementarily homotopic. We conclude from \cite[Theorem 2.1]{Fa3} that $\phi$ induces a map 
$$\phi:Um_n(R)/E_n(R)\to \GW {n-1}R.$$
We next prove that relation (2) holds. 

Let $a=(a_1,\ldots,a_{n-1})\in R^{n-1}$ be such that $(a,x)$ and $(a,1-x)$ are unimodular. Observe that the endomorphism $\alpha$ of $Kos(a)\oplus Kos(a)$ given by the matrix
$$\begin{pmatrix} 1 & x-1\\ 1 & x\end{pmatrix}$$
is an automorphism. A straightforward computation shows that 
$$(T^{n-1}\alpha^\vee)(x\rho(a)\bot (1-x)\rho(a))\alpha=\rho(a)\bot x(1-x)\rho(a)$$
where $\bot$ denotes the orthogonal sum. It follows that 
$$x\rho(a)+(1-x)\rho(a)=\rho(a)+x(1-x)\rho(a)$$
in $\GW {n-1}R$. But $\rho(a)=\phi(a,1)=0$ (observe that $\rho(a)=0$ in $\GW {n-1}R$, but that it is in general not trivial in $GW^{n-1}_{V(a)}(R)$). 
\end{proof}

In particular, $\phi$ induces a map $\phi:Um_n(R)/E_n(R)\to \GW {n-1}R$. In case $n$ is odd, we next show that $\phi$ factors through the action of $SL_n(R)$.



\begin{prop}\label{euler}
Let $R$ be a regular $k$-algebra and $n$ be an odd integer. Then $\phi(a)=e(P(a),\chi(a))$ for any $a\in Um_n(R)$.
\end{prop}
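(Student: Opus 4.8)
The plan is to compare the two symmetric complexes involved — the one defining $\phi(a)$, namely the pullback $f^*\theta(x_1,\ldots,x_n)$ for $f:\spec R\to(\A^n-0)$ given by $a=(a_1,\ldots,a_n)$, and the one defining the Euler class $e(P(a),\chi(a))$, namely $\rho(s)$ for a section $s:P(a)\to\O_{\spec R}$ — and exhibit an explicit isometry between them over $\spec R$. First I would unwind the definitions. By construction $f^*\theta(x_1,\ldots,x_n)$ is the class of $a_n\rho(a_1,\ldots,a_{n-1})$ on $Kos(a_1,\ldots,a_n)$, which makes sense in $GW^{n-1}(\spec R)$ precisely because $(a_1,\ldots,a_n)$ is unimodular (so the complex becomes acyclic after twisting, hence the shifted-duality class lands in the Grothendieck–Witt group rather than only in the group with supports). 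On the other side, $e(P(a))\in GW^n(\spec R,\det P(a))$ is represented by the Koszul-type complex $Kos(s)$ of any section $s$ of the rank-$(n-1)$ module $P(a)$, and $e(P(a),\chi(a))$ is its image under the isomorphism induced by $\chi(a):\det P(a)\simeq\O$. Note the shift bookkeeping already matches: $P(a)$ has rank $n-1$, so $e(P(a))$ lives in degree $n-1$, exactly like $\phi(a)$.

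Next I would choose the section of $P(a)$ cleverly. Recall from Section~\ref{um} that for $n$ odd we have the map $s(a):R^n\to R$ given by the row $(-a_2,a_1,-a_4,a_3,\ldots,-a_{n-1},a_{n-2},0)$, which factors through $P(a)$ to give a homomorphism $s(a):P(a)\to R$. I would take $s=s(a)$ and compute $Kos(s(a))$. The presentation $0\to R\xrightarrow{a^t}R^n\to P(a)\to 0$ together with the generator $\omega(a)$ of $\det P(a)$ described in Section~\ref{um} identifies $\bigwedge^i P(a)$ with an explicit quotient of $\bigwedge^i R^n$, and under this identification the Koszul differential $-\wedge s(a)$ on $\bigwedge^\bullet P(a)$ should be computed to agree, up to the twist by $\chi(a)$ and after the comparison $\det P(a)\simeq R\cdot e_1\wedge\cdots\wedge e_n$ via $\chi(a)$, with the complex $Kos(a_1,\ldots,a_n)$ equipped with the operator $a_n\rho(a_1,\ldots,a_{n-1})$ — here the point is that $s(a)$ is built from the first $n-1$ coordinates while the "$a_n$" slot is the one killed, matching the asymmetric role of $a_n$ in $\theta(a_1,\ldots,a_n)$. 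The correct signs $(-1)^{in+i(i-1)/2+n(n-1)/2}$ in $\rho(s)_i$, with $n$ odd, are exactly what is needed for this identification to be an isometry of symmetric complexes and not merely a chain homotopy equivalence.

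The cleanest way to organize the computation is to first do it universally on $S_n$ (the unimodular affine scheme): over $A_n=k[x_1,\ldots,x_n,y_1,\ldots,y_n]/(\sum x_iy_i-1)$ the tautological row $(x_1,\ldots,x_n)$ has the explicit inverse $(y_1,\ldots,y_n)$, so $\omega$, $\chi$, and $s$ are all given by honest formulas, the isometry can be checked by a finite symbolic computation, and then the general case follows by pulling back along the classifying morphism $\spec R\to S_n$, using functoriality of both $\phi$ (clear from its definition) and of the Euler class ($e(f^*\mathcal E)=f^*e(\mathcal E)$, noted in the excerpt) — one only has to check that $\chi$ and $s(a)$ are themselves compatible with pullback, which is immediate from their formulas. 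The main obstacle I expect is precisely the sign/shift bookkeeping: making the chosen identification $\bigwedge^\bullet P(a)\cong$ (truncation of) $Kos(a_1,\ldots,a_n)$ compatible simultaneously with the differentials, with the symmetric forms $\rho(s)$ versus $a_n\rho(a_1,\ldots,a_{n-1})$, and with the chosen orientation $\chi(a)$, is where the oddness of $n$ must be used and where an off-by-a-sign error would silently break the statement. Everything else — acyclicity, homotopy invariance to reduce to $S_n$, functoriality — is formal given the results already established above.
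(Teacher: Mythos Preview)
Your strategy is the same as the paper's: choose the section $s=s(a)$ on $P(a)$ and build an explicit isometry between the symmetric complex representing $\phi(a)$ and the one representing $e(P(a),\chi(a))$. Two points are worth correcting, though.

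First, a persistent slip: the complex underlying $\theta(a_1,\ldots,a_n)$ is $Kos(a_1,\ldots,a_{n-1})$, not $Kos(a_1,\ldots,a_n)$; the form is $a_n\rho(a_1,\ldots,a_{n-1})$, a symmetric \emph{quasi}-isomorphism on this length-$(n-1)$ complex once $(a_1,\ldots,a_n)$ is unimodular. This matters because $Kos(s(a))$ also has length $n-1$ (since $P(a)$ has rank $n-1$), so the two complexes can actually be compared degree by degree. Your suggestion to view $\bigwedge^iP(a)$ as a quotient of $\bigwedge^iR^n$ and then compare with a ``truncation'' of $Kos(a_1,\ldots,a_n)$ is the wrong shape and would not produce a morphism of complexes.

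Second, the paper does not pass to $S_n$; it writes down the comparison map directly over $R$. The concrete step you are missing is this: define $u:R^{n-1}\to P(a)$ by $u(e_{2j-1})=-f_{2j}$, $u(e_{2j})=f_{2j-1}$ for $1\le j\le (n-1)/2$ (so the generators are swapped in pairs with a sign). Then $s(a)\circ u$ equals the row $(a_1,\ldots,a_{n-1})$, hence $\wedge^\bullet u$ gives a chain map $U:Kos(a_1,\ldots,a_{n-1})\to Kos(s(a))$. One checks it is a quasi-isomorphism (both complexes have homology supported on $V(a_1,\ldots,a_{n-1})\subset D(a_n)$, where $u$ is an isomorphism), and the pairwise sign-swap is exactly what makes $(\wedge^{n-1-i}u)^\vee\circ\varphi_i\circ\wedge^iu$ land on $\chi(a)(f_1\wedge\cdots\wedge f_{n-1})$, so that after trivializing by $\chi(a)$ the form $\rho(s(a))$ pulls back to $a_n\rho(a_1,\ldots,a_{n-1})$. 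This explicit $u$ is the whole content of the proof; once you have it, no universal reduction is needed and the sign bookkeeping you anticipated becomes a one-line computation.
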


\begin{proof}
Let $a\in Um_n(R)$ and $P(a)$ its associated stably free module. Consider the section $s(a):P(a)\to R$ defined in Section \ref{um}. Let $Kos(s(a))$ be the Koszul complex associated to this section and 
$$\rho(a):Kos(s(a))\to T^{n-1}Kos(s(a))^\vee$$
the symmetric isomorphism defining the Euler class.

Let $u:R^{n-1}\to P(a)$ defined by $u(e_{2j})=f_{2j-1}$, $u(e_{2j-1})=-f_{2j}$ for any $1\leq j\leq (n-1)/2$. Taking exterior powers, we get homomorphisms 
$$\wedge^iu:\wedge^i(R^{n-1})\to \wedge^i(P(a))$$
and it is not hard to check that this induces a morphism of complexes 
$$U:Kos(a_1,\ldots,a_{n-1})\to Kos(s(a)).$$
The homology of both these complexes are concentrated on the closed subset $V(a_1,\ldots,a_{n-1})$ of $\spec R$. Therefore $U$ is a quasi-isomorphism on the open complement of $V(a_1,\ldots,a_{n-1})$. Since $a$ is unimodular, $V(a_1,\ldots,a_{n-1})$ is included in the principal open subset $D(a_n)$ of $\spec R$. But $u$ is an isomorphism on $D(a_n)$ and it follows that $U$ is a quasi-isomorphism. 

We now prove that $U$ is in fact an isometry between $\rho(a)$ and $\theta(a_1,\ldots,a_n)$. To do this, consider the following diagram
$$\xymatrix@C=4em{\wedge^i R^{n-1}\ar[r]^-{\wedge^iu}\ar[d] & \wedge^iP(a)\ar[d]^-{\varphi_i} \\
(\wedge^{n-1-i}R^{n-1})^\vee & (\wedge^{n-1-i}P(a))^\vee\ar[l]^-{(\wedge^{n-1-i}u)^\vee}}$$
Since $-f_{2j}\wedge f_{2j-1}=f_{2j-1}\wedge f_{2j}$, we get
$$(\wedge^{n-1-i}u)^\vee\varphi_i\wedge^iu)(e_{m_1}\wedge\ldots \wedge e_{m_i})(e_{r_1}\wedge\ldots\wedge e_{r_{n-1-i}})=(-1)^{sign(\sigma)}\chi(a)(f_1\wedge\ldots\wedge f_{n-1})$$
if $\sigma$ is a permutation such that $\sigma(m_1,\ldots,m_i,r_1,\ldots,r_{n-1-i})=(1,\ldots,n-1)$ and
$$(\wedge^{n-1-i}u)^\vee\varphi_i\wedge^iu)(e_{m_1}\wedge\ldots \wedge e_{m_i})(e_{r_1}\wedge\ldots\wedge e_{r_{n-1-i}})=0$$
if $e_{m_1}\wedge\ldots \wedge e_{m_i}\wedge e_{r_1}\wedge\ldots\wedge e_{r_{n-1-i}}=0$. It follows that $U$ induces an isometry between $\rho(a)$ and $\theta(a_1,\ldots,a_n)$. Whence the result.
\end{proof}

\begin{cor}\label{symbol}
Let $R$ be a regular $k$-algebra and $n$ be an odd integer. The map $\phi:Um_n(R)\to \GW {n-1}R$ induces a map
$$\Phi:Um_n(R)/SL_n(R)\to \GW {n-1}R.$$
\end{cor}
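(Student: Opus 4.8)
The plan is to deduce Corollary~\ref{symbol} directly from Proposition~\ref{euler} together with the $SL$-invariance of the Euler class established earlier in Section~\ref{gw}. By Lemma~\ref{wms} we already know that $\phi$ factors through $Um_n(R)/E_n(R)$; what remains is to show that $\phi(a)$ depends only on the orbit of $a$ under the full group $SL_n(R)$, i.e.\ that $\phi(a)=\phi(av)$ for every $v\in SL_n(R)$. Since $E_n(R)$ is normal in $SL_n(R)$, it suffices to check this after passing to the quotient, but the cleanest route is to argue via projective modules and Euler classes.

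The key steps, in order, are as follows. First, invoke Proposition~\ref{euler} to rewrite $\phi(a)=e(P(a),\chi(a))$ for all $a\in Um_n(R)$ (here $n$ odd and $R$ regular are exactly the hypotheses in force). Second, observe that if $v\in SL_n(R)$ then $P(av)$ is canonically isomorphic to $P(a)$: indeed, right multiplication by $v$ on $R^n$ carries the inclusion $(av)^t=v^t a^t$ onto $a^t$ up to the automorphism $v^t$ of $R^n$, so there is an induced isomorphism $\psi_v:P(av)\xrightarrow{\sim}P(a)$, and since $\det v=1$ this isomorphism is compatible with the trivializations, i.e.\ $\chi(a)\circ\det\psi_v=\chi(av)$. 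Third, apply functoriality of the Euler class together with its invariance under $SL(\mathcal E)$: the isomorphism $\psi_v$ has trivial determinant (after the identification of the two determinant lines), so by the discussion following the definition of $e(\mathcal E,\chi)$ we get $e(P(av),\chi(av))=e(P(a),\chi(a))$. Combining the three steps yields $\phi(av)=\phi(a)$, so $\phi$ descends to the claimed map $\Phi:Um_n(R)/SL_n(R)\to \GW{n-1}R$.

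The main obstacle I expect is bookkeeping the compatibility of the canonical isomorphism $\psi_v:P(av)\simeq P(a)$ with the orientation data $\chi$ and with the section $s(a)$ used to build the Koszul form, so that the appeal to $SL$-invariance of the Euler class is legitimate. Concretely one must check that under $\psi_v$ the distinguished generator $\omega(av)$ of $\det P(av)$ maps to $\omega(a)$ (this uses $\det v=1$ and the explicit formula for $\omega$ in terms of any $b$ with $ba^t=1$ from Section~\ref{um}), and that the Euler class is insensitive to the choice of section $s$, which is precisely the content of Lemma~\ref{connecting}'s ambient lemma (the homotopy argument showing $\rho(s)=\rho(s')$). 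Once these identifications are in place the corollary follows formally. A secondary point worth a sentence is that one should note the hypothesis $\sqrt{-1}\notin k$ plays no role here and that regularity of $R$ is needed only so that Proposition~\ref{euler} and the homotopy invariance used in its proof apply.
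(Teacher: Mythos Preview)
Your proposal is correct and follows exactly the same route as the paper: identify $\phi(a)$ with $e(P(a),\chi(a))$ via Proposition~\ref{euler}, then use that the Euler class $e(\mathcal E,\chi)$ is invariant under $SL(\mathcal E)$. The paper's proof is the one-line statement ``The Euler class is invariant under the action of $SL(P(a))$''; your write-up simply unpacks what stands behind this---namely, that $v\in SL_n(R)$ induces an oriented isomorphism $(P(a),\chi(a))\simeq(P(av),\chi(av))$ and hence identifies the two Euler classes. Two cosmetic points: the section-independence of the Euler class you invoke is the unnamed lemma preceding the definition of $e(\mathcal E)$ in Section~\ref{gw}, not Lemma~\ref{connecting}; and the remark about $\sqrt{-1}\notin k$ is superfluous since no such hypothesis appears in the corollary.
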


\begin{proof}
The Euler class is invariant under the action of $SL(P(a))$.
\end{proof}

\begin{rem}
If $R$ is regular of Krull dimension $2$ then $Um_3(R)/SL_3(R)$ has an abelian group structure such that $\Phi$ is an injective homomorphism, with cokernel the Chow group $CH^2(X)$ (\cite[Theorem 7.3]{BS2}).
\end{rem}

\begin{rem}
In case $n$ is even, then $\phi$ doesn't factor through the action of $SL_n$ as easily seen by considering $n=2$. 
\end{rem}

Consider the affine scheme $S_n=\spec {k[x_1,\ldots,x_n,y_1,\ldots,y_n]/(\sum x_iy_i-1)}$. If $R$ is any $k$-algebra, $a=(a_1,\ldots,a_n)$ is a unimodular row and $b=(b_1,\ldots,b_n)$ is such that $ba^t=1$, then we obtain a morphism $\spec R\to S_n$ mapping $x_i$ to $a_i$ and $y_i$ to $b_i$. Conversely, any morphism $\spec R\to S_n$ yields such an $a$ and $b$. We therefore call $S_n$ the \emph{unimodular affine scheme}.

The ring homomorphism $k[x_1,\ldots,x_n]\to k[x_1,\ldots,x_n,y_1,\ldots,y_n]/(\sum x_iy_i-1)$ given by $x_i\mapsto x_i,y\mapsto y_1$ for any $1\leq i\leq n$ induces a morphism $p_n:S_n\to (\A^n-0)$, which is easily checked to have affine fibres. It follows then from the Mayer-Vietoris sequence (\cite[Theorem 1]{Sch}) that $p_n^*:GW^{n-1}(\A^n-0)\to GW^{n-1}(S_n)$ is an isomorphism, showing that from a cohomological viewpoint the choice of a section of a unimodular row is not important. 

\begin{prop}
If $n\geq 3$ is odd, the map $p_n:S_n\to (\A^n-0)$ induces a surjective map
$$\Phi:Um_n(S_n)/SL_n(S_n)\to W(k)$$
with $\Phi(x_1,\ldots,x_n)=\langle 1\rangle$. 
\end{prop}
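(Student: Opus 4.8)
The plan is to combine two facts already established in the excerpt: that $GW^{n-1}_{red}(S_n) = W(k)$, and that the symbol $\Phi$ on $Um_n(R)/SL_n(R)$ exists for $n$ odd (Corollary \ref{symbol}). First I would recall that the projection $p_n : S_n \to (\A^n-0)$ induces an isomorphism $p_n^* : GW^{n-1}(\A^n-0) \to GW^{n-1}(S_n)$ (noted just before the statement, via the Mayer--Vietoris argument of \cite{Sch}), and that this isomorphism is compatible with the splitting off of the image of $GW^{n-1}(k)$, so that it descends to an isomorphism $p_n^* : GW^{n-1}_{red}(\A^n-0) \to GW^{n-1}_{red}(S_n)$. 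By Lemma \ref{coefficient}, $GW^{n-1}_{red}(\A^n-0) = W(k)\cdot\theta(x_1,\ldots,x_n)$, a free $W(k)$-module of rank one generated by $\theta(x_1,\ldots,x_n)$; hence $GW^{n-1}_{red}(S_n) \cong W(k)$ as well, with the class $p_n^*\theta(x_1,\ldots,x_n)$ as generator. This is the identification under which the target of $\Phi$ becomes $W(k)$.

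Next I would identify the tautological unimodular row. The scheme $S_n$ carries the \emph{tautological} unimodular row $a = (x_1,\ldots,x_n) \in Um_n(A_n)$, together with its section $b = (y_1,\ldots,y_n)$; the associated morphism $\spec A_n \to (\A^n-0)$ is precisely $p_n$. By the definition of $\phi$ (Section \ref{symbolic}), $\phi(x_1,\ldots,x_n) = p_n^*(\theta(x_1,\ldots,x_n))$, which by the previous paragraph is exactly the chosen generator of $GW^{n-1}_{red}(S_n) = W(k)$, i.e.\ $\langle 1\rangle$. Since $A_n$ is regular and $n$ is odd, Corollary \ref{symbol} applies and gives the well-defined factorization $\Phi : Um_n(S_n)/SL_n(S_n) \to GW^{n-1}_{red}(S_n) = W(k)$ with $\Phi(x_1,\ldots,x_n) = \langle 1 \rangle$.

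It remains to prove surjectivity. The image of $\Phi$ is the image of the composite $Um_n(S_n) \xrightarrow{\phi} GW^{n-1}_{red}(S_n) = W(k)$; since $\Phi(x_1,\ldots,x_n) = \langle 1\rangle$ we at least have $\Z\cdot\langle 1\rangle$ in the image, so it suffices to realize $\langle\alpha\rangle$ for every $\alpha \in k^\times$. For this I would exploit the action of $W(k)$ on the generator described in Remark \ref{action}: $\langle\alpha\rangle\cdot\theta(x_1,\ldots,x_n) = \theta(x_1,\ldots,x_{n-1},\alpha x_n)$. Concretely, given $\alpha\in k^\times$, consider the unimodular row $v_\alpha = (x_1,\ldots,x_{n-1},\alpha x_n) \in Um_n(A_n)$, with section $(y_1,\ldots,y_{n-1},\alpha^{-1}y_n)$; the corresponding morphism $\spec A_n \to (\A^n-0)$ is $p_n$ composed with the diagonal automorphism $\mathrm{diag}(1,\ldots,1,\alpha)$ of $(\A^n-0)$, so $\phi(v_\alpha) = p_n^*\theta(x_1,\ldots,x_{n-1},\alpha x_n) = \langle\alpha\rangle\cdot\langle 1\rangle = \langle\alpha\rangle$ in $W(k)$. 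Since $W(k)$ is generated as a group by the classes $\langle\alpha\rangle$, $\alpha\in k^\times$, this shows $\Phi$ is surjective.

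The only genuinely delicate point is the first step: checking that $p_n^*$ really descends to an isomorphism on the reduced groups, i.e.\ that it is compatible with the splittings coming from rational points and carries the generator $\theta(x_1,\ldots,x_n)$ to a generator of $GW^{n-1}_{red}(S_n)$. This follows from the naturality of the localization sequence and of the $p^*$ maps together with the Mayer--Vietoris isomorphism $p_n^* : GW^{n-1}(\A^n-0) \xrightarrow{\sim} GW^{n-1}(S_n)$ quoted from \cite{Sch}, plus the fact that $p_n$ admits a section (any choice of point of $\A^n-0$ lifts, since the fibres of $p_n$ are affine spaces), so the $GW^{n-1}(k)$-summands match up. Everything else is formal bookkeeping with the definitions of $\phi$, $\Phi$, and the $W(k)$-action.
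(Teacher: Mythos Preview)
Your computation of $\Phi(x_1,\ldots,x_n)=\langle 1\rangle$ and of $\phi(v_\alpha)=\langle\alpha\rangle$ for each $\alpha\in k^\times$ is fine, and the identification $GW^{n-1}_{red}(S_n)\cong W(k)$ via $p_n^*$ is exactly what the paper uses. The gap is in the last step: you conclude surjectivity from the fact that $W(k)$ is generated as a group by the one-dimensional classes $\langle\alpha\rangle$. But $\Phi$ is only a \emph{map of sets}; the source $Um_n(S_n)/SL_n(S_n)$ is not known to carry a group structure here (van der Kallen's range $d\le 2n-4$ fails badly for $S_n$, which has dimension $2n-1$), and neither $\phi$ nor $\Phi$ is asserted to be additive. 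Hitting every $\langle\alpha\rangle$ therefore does not by itself give you $\langle\alpha_1\rangle+\cdots+\langle\alpha_m\rangle$ in the image. Your remark that ``we at least have $\Z\cdot\langle 1\rangle$ in the image'' already presupposes the same missing additivity.

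The paper's proof supplies precisely this missing ingredient. It first reduces to $n=3$ via the closed immersion $u:S_3\hookrightarrow S_n$ and the transfer isomorphism $u_*:GW^2_{red}(S_3)\to GW^{n-1}_{red}(S_n)$ (Lemma~\ref{transfer}), together with the explicit map $\beta$ on unimodular rows. In the case $n=3$ it then identifies $\phi$ with the Vaserstein symbol $V:Um_3(S_3)/E_3(S_3)\to \Ko{S_3}$, which satisfies the \emph{Vaserstein rule}
\[
\phi(a_1,a_2,a_3)+\phi(a_1,b_2,b_3)=\phi\!\left(a_1,\ \begin{pmatrix}a_2 & a_3\end{pmatrix}\begin{pmatrix}b_2 & b_3\\ -c_3 & c_2\end{pmatrix}\right).
\]
This rule is what lets one realize an arbitrary sum $\langle\alpha_1,\ldots,\alpha_m\rangle$ as $\phi$ of a \emph{single} unimodular row $(x_1,b,c)$, completing the surjectivity argument. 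If you want to salvage your approach without the reduction to $n=3$, you would still need some additivity statement of this sort for $\phi$ on $Um_n(S_n)$.
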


\begin{proof}
In view of Lemma \ref{coefficient} and Proposition \ref{symbol}, it remains only to show that $\Phi$ is surjective. It is obviously sufficient to prove that $\phi:Um_n(S_n)/E_n(S_n)\to W(k)$ is surjective to conclude.

Suppose that $n>3$. Setting $x_i=0$ and $y_i=0$ for $i>3$, we get closed immersions $j^\prime:(\A^3-0)\to (\A^n-0)$ and $u:S_3\to S_n$ such that the diagram
$$\xymatrix{S_3\ar[r]^-u\ar[d]_-{p_3} & S_n\ar[d]^-{p_n}\\
(\A^3-0)\ar[r]_-{j^\prime} & (\A^n-0)}$$
commutes. Now we can define a map
$$\beta:Um_3(S_3)/E_3(S_3)\to Um_n(S_n)/E_n(S_n)$$
by $\beta(\overline a_1,\overline a_2,\overline a_3)=(a_1,a_2,a_3,x_4,\ldots,x_n)$, where $a_i$ are any lifts of $\overline a_i$. Seeing $p_3$ and $p_n$ as unimodular rows, it is easily checked that $\beta(p_3)=p_n$. It follows that the following diagram
$$\xymatrix{Um_3(S_3)/E_3(S_3)\ar[r]^-\beta\ar[d]_-\phi & Um_n(S_n)/E_n(S_n)\ar[d]^-\phi \\
GW^2_{red}(S_3)\ar[r]_-{u_*} & GW^{n-1}_{red}(S_n)}$$
commutes. Since $u_*$ is an isomorphism by Lemma \ref{transfer}, it suffices to check that $\phi:Um_3(S_3)/E_3(S_3)\to W(k)$ is surjective to conclude. 

Recall that $\Ko {S_3}$ is defined by the exact sequence 
$$\xymatrix{0\ar[r] & K_0(k)\ar[r]^-{p^*H} & GW^2(S_3)\ar[r] & \Ko {S_3}\ar[r] & 0}$$
where $H:K_0(k)\to GW^2(k)$ is the hyperbolic functor (see \cite[\S 2.1]{Fasel10} for instance) and $p^*:GW^2(k)\to GW^2(S_3)$ is the pull-back homomorphism. Since $H$ is an isomorphism, it follows that there is a natural isomorphism $\Ko {S_3}\to GW^2_{red}(S_3)$. Under this identification, the symbol $\phi$ coincides with the Vaserstein symbol
$$V:Um_3(S_3)/E_3(S_3)\to \Ko {S_3}$$
defined in \cite[Theorem 5.2]{VS} (see also \cite[\S 5]{Bass75}). In particular, this shows that $\phi$ satisfies the Vaserstein rule: For any unimodular rows $(a_1,a_2,a_3)$ and $(a_1,b_2,b_3)$, we have
$$\phi(a_1,a_2,a_3)+\phi(a_1,b_2,b_3)=\phi(a_1, \begin{pmatrix} a_2 & a_3\end{pmatrix}\cdot \begin{pmatrix} b_2 & b_3\\ -c_3 & c_2\end{pmatrix})$$
where $c_2,c_3$ are such that $b_2c_2+b_3c_3\equiv 1 \pmod {a_1}$. This proves that for any $\alpha_1,\ldots,\alpha_n\in k^\times$, there exists $b,c\in k[x_1,x_2,x_3,y_1,y_2,y_3]/(\sum x_iy_i-1)$ such that 
$$\phi(x_1,x_2,\alpha_1x_3)+\ldots+\phi(x_1,x_2,\alpha_nx_3)=\phi(x_1,b,c).$$
Remark \ref{action} then shows that $\langle \alpha_1,\ldots,\alpha_n\rangle$ is in the image of $\phi$.

\end{proof}

\begin{rem}
If $n\geq 5$ is an odd integer, the map $\Phi$ is not injective. Indeed, the proof of the above proposition shows that $\Phi$ satisfies the Vaserstein rule. It follows from \cite[lemma 3.5 (v)]{vdk} that $\Phi(a_1,a_2,\ldots,a_n)+\Phi(b_1^2,a_2,\ldots,a_n)=\Phi(a_1b_1^2,a_2,\ldots,a_n)$ for any unimodular rows $(a_1,a_2,\ldots,a_n)$ and $(b_1,a_2,\ldots,a_n)$. Using this and \cite[Lemma 3.5 (iii)]{vdk}, we see that 
$$\Phi(x_1^2,x_2,\ldots,x_n)=\Phi(x_1,x_2,\ldots,x_n)+\Phi(-x_1,x_2,\ldots,x_n)$$
By Remark \ref{action}, this reads as
$$\Phi(x_1^2,x_2,\ldots,x_n)=\langle 1,-1\rangle \Phi(x_1,x_2,\ldots,x_n)=0$$
But it is well-known that $(x_1^2,x_2,\ldots,x_n)$ is not completable (\cite{Suslin91}). Hence $\Phi$ is not injective.
\end{rem}

\begin{quest}
Is the map $\Phi:Um_3(S_3)/SL_3(S_3)\to W(k)$ bijective?
\end{quest}


\section{The degree map}\label{degree_map}

\subsection{The map}

Let $n\geq 2$ and $g:(\A^n-0)\to (\A^n-0)$ be a morphism. Taking global sections, we see that there is a commutative diagram
$$\xymatrix{(\A^n-0)\ar[r]^-g\ar[d]_-i & (\A^n-0)\ar[d]^-i \\
\A^n\ar[r]_-{g^\prime} & \A^n}$$
where $i:(\A^n-0)\to \A^n$ is the inclusion, and $g^\prime:\A^n\to \A^n$ is the morphism associated to some $\varphi:k[x_1,\ldots,x_n]\to k[x_1,\ldots,x_n]$ with $\varphi(\mathfrak m)\subset \mathfrak m$. We then get a commutative diagram
$$\xymatrix{0\ar[r] & GW^{n-1}(\A^n)\ar[r]^-{i^*}\ar[d]^-{(g^\prime)^*} & GW^{n-1}(\A^n-0)\ar[d]^-{g^*}\ar[r]^-\partial & W^n_{\{0\}}(\A^{n-1})\ar[r]\ar[d] & 0\\
0\ar[r] & GW^{n-1}(\A^n)\ar[r]_-{i^*} & GW^{n-1}(\A^n-0)\ar[r]_-\partial & W^n_{\{0\}}(\A^n)\ar[r] & 0}$$
\begin{lem}
We have $g^*(\theta(x_1,\ldots,x_n))=\rho(\varphi(x_1),\ldots,\varphi(x_n))\cdot \theta(x_1,\ldots,x_n)$ in $\GW {n-1}{\A^n-0}$.
\end{lem}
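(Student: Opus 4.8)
The plan is to track the class $\theta(x_1,\ldots,x_n)$ through the commutative diagram of localization sequences displayed just above the statement, using the fact that the symbol $\theta$ is defined by the Koszul form $\rho$ and that $g^*$ is compatible with pullbacks of Koszul complexes. First I would recall that, by definition, $\theta(x_1,\ldots,x_n)\in GW^{n-1}(\A^n-0)$ is the class of $x_n\rho(x_1,\ldots,x_{n-1})$ on the open set $\A^n-0$, and that $g^*\theta(x_1,\ldots,x_n)$ is represented by the pullback of this symmetric isomorphism of Koszul complexes along $g'$, i.e.\ by the form built from $\rho(\varphi(x_1),\ldots,\varphi(x_{n-1}))$ after inverting $\varphi(x_n)$. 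The key identity to establish is therefore that, as an element of $GW^{n-1}(\A^n-0)$, this pulled-back form equals $\rho(\varphi(x_1),\ldots,\varphi(x_n))\cdot\theta(x_1,\ldots,x_n)$, where $\rho(\varphi(x_1),\ldots,\varphi(x_n))\in W^n_{\{0\}}(\A^n)\simeq W(k)$ is the scalar supplied by d\'evissage (Remark \ref{orientation}).

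The cleanest route is to use the injectivity of $\partial$ modulo the constant part, i.e.\ to compare both sides after applying $\partial$ and after applying $i^*$ of the reduced/constant piece. From Lemma \ref{coefficient} we know $GW^{n-1}_{red}(\A^n-0)\simeq W^n_{\{0\}}(\A^n)$ via $\partial$, and that $\partial(\theta(x_1,\ldots,x_n))=\rho(x_1,\ldots,x_n)$, which under the d\'evissage identification of Remark \ref{orientation} is the unit $\langle 1\rangle$. So it suffices to compute $\partial(g^*\theta(x_1,\ldots,x_n))$ in $W^n_{\{0\}}(\A^n)=W(k)\cdot\rho(x_1,\ldots,x_n)$ and check it equals $\rho(\varphi(x_1),\ldots,\varphi(x_n))$. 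By the commutativity of the right-hand square of the displayed diagram, $\partial(g^*\theta(x_1,\ldots,x_n))$ is the image under the right vertical map of $\partial(\theta(x_1,\ldots,x_n))=\rho(x_1,\ldots,x_n)\in W^n_{\{0\}}(\A^{n-1})$ — wait, more precisely one uses that the class $g^*\theta$ has support image in $Z(\varphi(x_1))\cap\cdots\cap Z(\varphi(x_n))$, so its boundary is the class $\rho(\varphi(x_1),\ldots,\varphi(x_n))$ in $W^n_{\{0\}}(\A^n)$ by Lemma \ref{connecting} applied to the sequence $(\varphi(x_1),\ldots,\varphi(x_n))$ (whose common zero locus is $\{0\}$ since $\varphi(\mathfrak m)\subset\mathfrak m$ and the row is unimodular away from $0$). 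Then d\'evissage expresses this class as the scalar $\rho(\varphi(x_1),\ldots,\varphi(x_n))\in W(k)$ times the chosen generator $\rho(x_1,\ldots,x_n)$, which is exactly $\partial$ of the right-hand side of the claimed identity.

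It then remains to match the constant parts, which is automatic: both $g^*\theta(x_1,\ldots,x_n)$ and $\rho(\varphi(x_1),\ldots,\varphi(x_n))\cdot\theta(x_1,\ldots,x_n)$ lie in $GW^{n-1}_{red}(\A^n-0)$ — the former because $\theta$ itself does and $g^*$ preserves the reduced part (it commutes with the structure map to $\spec k$), the latter by construction since it is $W(k)$ acting on $\theta$. Since $\partial$ is injective on $GW^{n-1}_{red}(\A^n-0)$ by the split exact sequence in the proof of Lemma \ref{coefficient}, the agreement of the boundaries forces equality, and the lemma follows.

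The main obstacle I expect is the careful bookkeeping in identifying $\partial(g^*\theta(x_1,\ldots,x_n))$ with $\rho(\varphi(x_1),\ldots,\varphi(x_n))$: one must check that pulling back the Koszul symmetric isomorphism $x_n\rho(x_1,\ldots,x_{n-1})$ along $g'$ genuinely yields (up to the sign conventions fixed in the definition of $\rho$, and the trivialization of the relevant determinant line bundle) the form $\varphi(x_n)\rho(\varphi(x_1),\ldots,\varphi(x_{n-1}))$ representing $\theta(\varphi(x_1),\ldots,\varphi(x_n))$, and that Lemma \ref{connecting} then applies verbatim to the (possibly non-regular) sequence $(\varphi(x_1),\ldots,\varphi(x_n))$. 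The Leibniz-type signs in \cite[Remark 4.2]{BG} and \cite[Theorem 5.2]{Ba2} are precisely what make this match work, and checking they cause no discrepancy is the technical heart of the argument.
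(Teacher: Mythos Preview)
Your proposal is correct and follows essentially the same approach as the paper: reduce to computing $\partial\big(g^*\theta(x_1,\ldots,x_n)\big)$ in $W^n_{\{0\}}(\A^n)\simeq W(k)$ using the isomorphism $\partial:GW^{n-1}_{red}(\A^n-0)\to W^n_{\{0\}}(\A^n)$ from Lemma~\ref{coefficient}, and identify the result with $\rho(\varphi(x_1),\ldots,\varphi(x_n))$ via Lemma~\ref{connecting}. The paper's proof is two sentences to this effect; your version spells out the justification (that both sides live in the reduced part, that $\partial$ is injective there, and that $g^*\theta(x_1,\ldots,x_n)=\theta(\varphi(x_1),\ldots,\varphi(x_n))$ by functoriality of Koszul complexes), which is exactly the content the paper is taking for granted.
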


\begin{proof}
It suffices to compute the element $\partial g^*(\theta(x_1,\ldots,x_n))$ in $W^n_{\{0\}}(\A^n)=W(k)$. This is exactly $\rho(\varphi(x_1),\ldots,\varphi(x_n))$.
\end{proof}

\begin{rem}
Here, we see $\rho(\varphi(x_1),\ldots,\varphi(x_n))$ as an element of $W(k)$ using d\'evissage (see Section \ref{finite_length}).
\end{rem}

\begin{defin}\label{deg}
We call the class of $\rho(\varphi(x_1),\ldots,\varphi(x_n))$ in $W(k)$ the \emph{degree} of $g$ and we denote it by $\mathrm{deg}(g)$.
\end{defin}

\begin{rem}
This terminology is justified as follows: Suppose that $k$ admits a real embedding $k\subset \R$. Pulling back, we get a morphism $g:(\A^n_\R-0)\to (\A^n_\R-0)$ and, taking the rational points, a $C^\infty$ map $g:(\R^n-0)\to (\R^n-0)$. Now one can associate to any $C^\infty$ map $f:(\R^n,0)\to (\R^n,0)$ a topological degree of $f$, denoted by $\mathrm{deg}_{\mathrm{top}}(f)$, as the sum of the signs of the Jacobian of $f$ at all preimages (under $f$) near $0$ of a regular value of $f$ near $0$. In \cite{EL}, the authors show that this degree can be computed as the degree (in the sense of Definition \ref{deg}) of a polynomial map $f^\prime$ approximating $f$. Thus, our degree and the topological degree coincide in that case. 
\end{rem}

\subsection{The counter-example}
Let $n\geq 2$ and $g:(\A^n-0)\to (\A^n-0)$ be the morphism defined by the ring homomorphism $\varphi:k[x_1,\ldots,x_n]\to k[x_1,\ldots,x_n]$ with $\varphi(x_1)=x_1^2-x_2^2$, $\varphi(x_2)=x_1x_2$ and $\varphi(x_i)=x_i$ for $3\leq i\leq n$. Observe that $k[x_1,\ldots,x_n]/(\varphi(x_1),\ldots,\varphi(x_n))$ is a finite length module of length $4$. Following Section \ref{finite_length}, it suffices to compute the class in $W^{fl}(k[x_1,\ldots,x_n]_\mathfrak m)$ of this module endowed with the form
$$\rho:k[x_1,\ldots,x_n]/(\varphi(x_1),\ldots,\varphi(x_n))\to \cha{k[x_1,\ldots,x_n]/(\varphi(x_1),\ldots,\varphi(x_n))}$$
defined by $\rho(1)=Kos(\varphi(x_1),\ldots,\varphi(x_n))$.

\begin{lem}\label{degree}
We have $\mathrm{deg}(g)=\langle 1,1\rangle$.
\end{lem}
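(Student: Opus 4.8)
The plan is to compute directly the class of the finite length module
$M = k[x_1,\ldots,x_n]_{\mathfrak m}/(x_1^2-x_2^2,\, x_1x_2,\, x_3,\ldots,x_n)$
equipped with the symmetric form $\rho$ induced by the Koszul complex $Kos(x_1^2-x_2^2,\, x_1x_2,\, x_3,\ldots,x_n)$, as an element of $W^{fl}(k[x_1,\ldots,x_n]_{\mathfrak m}) \cong W(k)$. First I would reduce to the two-variable case: since $\varphi(x_i)=x_i$ for $i\geq 3$, the regular sequence $(x_3,\ldots,x_n)$ can be split off and, by the Leibniz/multiplicativity formula for symmetric forms on Koszul complexes (the same tool invoked for Lemma \ref{connecting}), the class of $\rho$ equals $\langle 1\rangle$ times the class of the two-variable form $\rho(x_1^2-x_2^2,\, x_1x_2)$ on $N = k[x_1,x_2]_{(x_1,x_2)}/(x_1^2-x_2^2,\, x_1x_2)$. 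So it suffices to show this latter form has class $\langle 1,1\rangle$ in $W^{fl}(k[x_1,x_2]_{(x_1,x_2)}) \cong W(k)$.

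Next I would make $N$ and its duality explicit. As a $k$-vector space, $N$ has dimension $4$ with basis $1, x_1, x_2, x_1^2$ (using $x_1x_2 = 0$ and $x_2^2 = x_1^2$, so $x_1^3 = x_1 x_1^2 = x_1(x_1^2) $; one checks $x_1^3 = x_1 x_2^2 = (x_1 x_2)x_2 = 0$, and similarly $x_2^3=0$, so $\mathfrak m^3 = 0$ and indeed $\ell(N)=4$). The socle of $N$ is one-dimensional, spanned by $x_1^2$, which is the crucial structural fact: it means $N$ is a Gorenstein Artin local ring with socle generator $x_1^2$, so the symmetric bilinear form $b(u,v) = $ (coefficient of $x_1^2$ in $uv$, under the chosen trivialization of $\exten 2{}{N}{})$ is nondegenerate. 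I would then identify, via \cite[Appendix E.2]{Fa1} and the description in Section \ref{finite_length}, the Koszul-induced form $\rho$ with this residue/trace form up to the canonical sign $(-1)^{d(d-1)/2}=(-1)^{1}=-1$ for $d=2$ — though this global sign only multiplies the eventual answer by $\langle -1\rangle\langle -1\rangle$-type factors and should be tracked carefully.

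I would then diagonalize the $4$-dimensional form $b$. In the basis $1, x_1, x_2, x_1^2$: we have $b(1,1) = 0$ (since $1\cdot 1 = 1$ has no $x_1^2$ term), $b(1,x_1^2)=1$, $b(x_1,x_1) = $ coeff of $x_1^2$ in $x_1^2 = 1$, $b(x_2,x_2) = $ coeff of $x_1^2$ in $x_2^2 = x_1^2$, which is $1$, and $b(x_1,x_2)=0$ since $x_1x_2=0$, while $x_1,x_2$ pair trivially with $x_1^2$ and $b(1,x_1)=b(1,x_2)=0$. So the form is the orthogonal sum of the hyperbolic plane on $\{1, x_1^2\}$ (Gram matrix $\begin{pmatrix}0&1\\1&0\end{pmatrix}$) and the rank-two form $\langle 1, 1\rangle$ on $\{x_1, x_2\}$. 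In the Witt group the hyperbolic plane vanishes, leaving $\langle 1,1\rangle$. Finally I would feed in the sign corrections: the overall sign $(-1)^{d(d-1)/2}=-1$ from $\varpi$ turns $\langle 1,1\rangle$ into $\langle -1,-1\rangle$ a priori, but one must check against the normalization fixed in Remark \ref{orientation} (the Koszul complex of $(x_1,x_2)$ is the chosen generator of $\exten 2{}{k}{})$, which is where the hypothesis $\sqrt{-1}\notin k$ will ultimately matter for non-completability, though the bare degree computation $\deg(g)=\langle 1,1\rangle$ should come out as stated once the trivialization is matched to Remark \ref{orientation}. The main obstacle I anticipate is precisely this bookkeeping of the d\'evissage trivialization and the canonical sign $\varpi$: getting the identification of $\rho(1)$ with the residue pairing correct on the nose, rather than off by a unit scalar $\langle \alpha\rangle$, since that is exactly what distinguishes $\langle 1,1\rangle$ from, say, $\langle -1,-1\rangle$ or $\langle 2,2\rangle$ in $W(k)$.
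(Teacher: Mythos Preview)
Your approach is correct and is genuinely different from the paper's. Both begin with the same d\'evissage to two variables. From there, the paper performs a sub-Lagrangian reduction: it shows that the socle inclusion $x^2:R/\mathfrak m\hookrightarrow R/M$ is a sub-Lagrangian of $(R/M,\rho)$, so that by \cite[\S 1.1.5]{Baba} the Witt class of $\rho$ equals that of an induced form $\psi$ on the $2$-dimensional subquotient $\mathfrak m/\mathfrak m^2$; it then computes $\psi$ via several explicit chain-level diagrams and finds the identity matrix in the basis $x,y$. You instead view $N=R/M$ as a Gorenstein Artin local algebra and diagonalize the full $4\times 4$ residue pairing $b_\ell(u,v)=\ell(uv)$ directly, obtaining (hyperbolic)$\perp\langle 1,1\rangle$. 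Conceptually your route is the Eisenbud--Levine computation alluded to in the paper's remark after Definition~\ref{deg}; it is shorter and more transparent once the dictionary is set up.

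One point to tighten: you assert $b(1,1)=b(1,x_i)=0$ by taking $\ell$ to be literally ``coefficient of $x_1^2$'', but a priori $\rho(1)=Kos(x_1^2-x_2^2,x_1x_2)$ only gives \emph{some} generator of $\cha N$, i.e.\ some $\ell$ nonvanishing on the socle. This does not actually matter for the Witt class: since $x_1^2$ pairs trivially with $x_1,x_2,x_1^2$ and nontrivially with $1$, the span of $1$ and $x_1^2$ can always be split off as a hyperbolic plane after clearing the first row and column against it, regardless of $\ell(1),\ell(x_1),\ell(x_2)$. What remains on $\mathrm{span}(x_1,x_2)$ is $\langle\ell(x_1^2),\ell(x_1^2)\rangle$. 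So the \emph{only} normalization you must pin down is the scalar $\ell(x_1^2)\in k^\times/(k^\times)^2$, i.e.\ the image of $\rho(1)$ under the restriction $\cha{x^2}:\cha{R/M}\to\cha{R/\mathfrak m}$ in the trivialization of Remark~\ref{orientation}. The paper does exactly this computation (finding $\cha{x^2}(Kos(x^2-y^2,xy))=Kos(x,y)$, hence $\ell(x_1^2)=1$) via one explicit morphism of resolutions; you will need that same diagram, or an equivalent Scheja--Storch argument, to finish. The sign $\varpi=(-1)^{d(d-1)/2}$ is absorbed in the identification of Section~\ref{finite_length} and does not alter the outcome.
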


\begin{proof}
Let $R=k[x,y]$ and $\mathfrak m=(x,y)$. By d\'evissage, it suffices clearly to prove that $\mathrm{deg}(g)=\langle 1,1\rangle$ if $g:(\A^2-0)\to (\A^2-0)$ is the map induced by $\varphi:R\to R$ with $\varphi(x)=x^2-y^2$ and $\varphi(y)=xy$. We denote by $M$ the ideal generated by the sequence $(x^2-y^2,xy)$ and by $\rho:R/M\to \cha {R/M}$ the symmetric isomorphism $\rho(\varphi(x),\varphi(y))$. Consider the class of $x^2\in R/M$. A direct computation shows that it is non trivial and annihilated by $\mathfrak m$. Moreover, $R/(M,x^2)=R/\mathfrak m^2$ and we then get the following diagram:
$$\xymatrix{0\ar[r] & R/\mathfrak m\ar[r]^-{x^2} & R/M\ar[r]^-\pi\ar[d]^-\rho & R/\mathfrak m^2\ar[r] & 0\\
0\ar[r] & \cha {R/\mathfrak m^2}\ar[r]_-{\cha{\pi}} & \cha {R/M}\ar[r]_-{\cha {x^2}} & \cha {R/\mathfrak m}\ar[r] & 0}$$
We next check that $\cha {x^2}(Kos(x^2-y^2,xy))=Kos(x,y)$. This is clear from the commutative diagram
$$\xymatrix@C=5.3em@R=3em{0\ar[r] & R\ar[r]^-{\begin{pmatrix} -y \\ x\end{pmatrix}}\ar@{=}[d] & R^2\ar[r]^-{\begin{pmatrix} x & y\end{pmatrix}}\ar[d]^-{\begin{pmatrix} x & 0\\ y & x\end{pmatrix}} & R\ar[r]\ar[d]^-{x^2} & R/\mathfrak m\ar[r]\ar[d]^-{x^2} & 0\\
0\ar[r] & R\ar[r]_-{\begin{pmatrix} -xy \\ x^2-y^2\end{pmatrix}} & R^2\ar[r]_-{\begin{pmatrix} x^2-y^2 & xy\end{pmatrix}} & R\ar[r] & R/\mathfrak m\ar[r] & 0.}$$
It follows then that $\cha{x^2}\rho x^2=0$, proving that $x^2:R/\mathfrak m\to R/M$ is a sub-Lagrangian of $\rho:R/M\to \cha {R/M}$. There exists then a homomorphism $\alpha:R/\mathfrak m^2\to \cha {R/\mathfrak m}$ such that the diagram commutes:
$$\xymatrix{  & & & \mathfrak m/\mathfrak m^2\ar[d]^-i & \\
0\ar[r] & R/\mathfrak m\ar[r]^-{x^2}\ar@{-->}[d]^-{\cha\alpha} & R/M\ar[r]^-\pi\ar[d]^-\rho & R/\mathfrak m^2\ar[r]\ar@{-->}[d]^-\alpha & 0\\
0\ar[r] & \cha {R/\mathfrak m^2}\ar[r]_-{\cha{\pi}}\ar[d]^-{\cha i} & \cha {R/M}\ar[r]_-{\cha {x^2}} & \cha {R/\mathfrak m}\ar[r] & 0\\
 & \cha{\mathfrak m/\mathfrak m^2} & & & }$$
By commutativity of the diagram, $\alpha(1)=Kos(x,y)$ and its kernel is $\mathfrak m/\mathfrak m^2$. 
The snake Lemma yields an isomorphism $\psi:\mathfrak m/\mathfrak m^2\to \cha{\mathfrak m/\mathfrak m^2}$ which is easily checked to be symmetric. The sub-Lagrangian reduction (\cite[\S 1.1.5]{Baba}) shows then that $\rho=\psi$ in $W^{fl}(R)$. In order to compute $\psi$, we need a better understanding of the groups and homomorphisms in the diagram. We first compute $\cha {R/\mathfrak m^2}$. 

The projective resolution
$$\xymatrix@C=5.3em{0\ar[r] & R^2\ar[r]^-{\begin{pmatrix} y & 0 \\ -x & -y \\ 0 & x\end{pmatrix}} & R^3\ar[r]^-{\begin{pmatrix} x^2 & xy & y^2\end{pmatrix}} & R\ar[r] & R/\mathfrak m^2\ar[r] & 0     }$$
shows that $\cha {R/\mathfrak m^2}$ is generated by two extensions, the push-out by the homomorphisms $\begin{pmatrix} 1 & 0\end{pmatrix}:R^2\to R$ and $\begin{pmatrix} 0 & 1\end{pmatrix}:R^2\to R$. We will denote by $\epsilon_1$ the first extension and by $\epsilon_2$ the second extension. Now the diagram
$$\xymatrix@C=4.5em@R=3.5em{0\ar[r] & R\ar[r]^-{\begin{pmatrix} -xy \\ x^2-y^2\end{pmatrix}}\ar[d]_-{-x} & R^2\ar[r]^-{\begin{pmatrix} x^2-y^2 & xy\end{pmatrix}}\ar[d]_-{\begin{pmatrix} 1 & 0\\ 0 & 1\\ -1 & 0 \end{pmatrix}} & R\ar[r]\ar@{=}[d] & R/M\ar[r]\ar[d]_-{\pi} & 0\\
0\ar[r] & R\ar[r]_-{\begin{pmatrix} y \\ -x \\0\end{pmatrix}} & R^3/(0,-y,x)\ar[r]_-{\begin{pmatrix} x^2 & xy & y^2\end{pmatrix}} & R\ar[r] & R/\mathfrak m^2\ar[r] & 0    }$$
shows that $\cha\pi(\epsilon_1)=-xKos(x^2-y^2,xy)$. A straightforward computation shows that $\cha\pi(\epsilon_2)=yKos(x^2-y^2,xy)$. 

Now $\mathfrak m/\mathfrak m^2$ is generated by the classes of $x$ and $y$, yielding an isomorphism $(R/\mathfrak m)^2\simeq \mathfrak m/\mathfrak m^2$. We get a projective resolution
$$\xymatrix@C=4.5em{0\ar[r] & R^2\ar[r]^-{\begin{pmatrix} -y & 0\\ x& 0\\ 0 & -y\\ 0 & x\end{pmatrix}} & R^4\ar[r]^-{\begin{pmatrix} x & y & 0 & 0\\ 0 & 0 & x & y\end{pmatrix}} & R^2\ar[r]^-{\begin{pmatrix} x & 0 \\0 & y\end{pmatrix}} & \mathfrak m/\mathfrak m^2\ar[r] & 0}$$
and the commutative diagram
$$\xymatrix@C=4.0em@R=4em{0\ar[r] & R^2\ar[r]^-{\begin{pmatrix} -y & 0\\ x& 0\\ 0 & -y\\ 0 & x\end{pmatrix}}\ar[d]_-{\begin{pmatrix} -1 & 0\end{pmatrix}} & R^4\ar[r]^-{\begin{pmatrix} x & y & 0 & 0\\ 0 & 0 & x & y\end{pmatrix}}\ar[d]_-{\begin{pmatrix} 1 & 0 & 0 & 0\\ 0 & 1 & 1 & 0\\ 0 & 0 & 0 & 1\end{pmatrix}} & R^2\ar[r]^-{\begin{pmatrix} x & 0 \\0 & y\end{pmatrix}}\ar[d]_-{\begin{pmatrix} x & y\end{pmatrix}} & \mathfrak m/\mathfrak m^2\ar[r]\ar[d]_-i & 0 \\
0\ar[r] & R\ar[r]_-{\begin{pmatrix} y \\ -x \\0\end{pmatrix}} & R^3/(0,-y,x)\ar[r]_-{\begin{pmatrix} x^2 & xy & y^2\end{pmatrix}} & R\ar[r] & R/\mathfrak m^2\ar[r] & 0}$$
shows that $\cha i(\epsilon_1)=\begin{pmatrix}-1 & 0\end{pmatrix}$. Similarly, $\cha i(\epsilon_2)=\begin{pmatrix} 0 & 1\end{pmatrix}$. 

Using our computations, we see that $\psi:\mathfrak m/\mathfrak m^2\to \cha{\mathfrak m/\mathfrak m^2}$ is given (in the basis $x,y$) by the matrix $\begin{pmatrix} 1 & 0\\ 0 & 1\end{pmatrix}$. Hence $\mathrm{deg}(g)=\langle 1,1\rangle$.
\end{proof}

\begin{cor}
Let $k$ be a field such that $\sqrt -1\not\in k$ and let $n\geq 3$ be an odd integer. Consider the unimodular row $(x_1,\ldots,x_n)\in Um_n(S_n)$ and the map $g:(\A^n-0)\to (\A^n-0)$ defined by the algebra homomorphism $\varphi:k[x_1,\ldots,x_n]\to k[x_1,\ldots,x_n]$ given by $\varphi(x_1)=x_1^2-x_2^2$, $\varphi(x_2)=x_1x_2$ and $\varphi(x_i)=x_i$ for $3\leq i\leq n$. Then the unimodular row $(x_1^2-x_2^2,x_1x_2,x_3,\ldots,x_n)\in Um_n(S_n)$ is not completable in an invertible matrix.
\end{cor}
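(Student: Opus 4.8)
The plan is to detect the non-completability of $v:=(x_1^2-x_2^2,x_1x_2,x_3,\ldots,x_n)\in Um_n(S_n)$ by computing the symbol $\Phi$ on it. Let $p_n\colon S_n\to(\A^n-0)$ be the projection, regarded as the unimodular row $(x_1,\ldots,x_n)$ over $S_n$. The first, purely formal, step is the observation that $v=g\circ p_n$ as morphisms $S_n\to(\A^n-0)$: the row $p_n$ corresponds to $x_i\mapsto x_i$, and postcomposing with the homomorphism $\varphi$ defining $g$ yields exactly $(x_1^2-x_2^2,\,x_1x_2,\,x_3,\ldots,x_n)$.

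Since $S_n$ is the spectrum of a smooth, hence regular, $k$-algebra and $n$ is odd, Corollary~\ref{symbol} supplies the symbol $\Phi\colon Um_n(S_n)/SL_n(S_n)\to GW^{n-1}_{red}(S_n)$, and by the last proposition of Section~\ref{symbolic} we have $GW^{n-1}_{red}(S_n)\cong W(k)$ with $\Phi(p_n)=\langle 1\rangle$. Next I would compute $\Phi(v)$: writing $\phi(v)=p_n^*\bigl(g^*\theta(x_1,\ldots,x_n)\bigr)$ and applying the lemma of Section~\ref{degree_map}, which gives $g^*\theta(x_1,\ldots,x_n)=\rho(\varphi(x_1),\ldots,\varphi(x_n))\cdot\theta(x_1,\ldots,x_n)$, together with Definition~\ref{deg} and $\Phi(p_n)=\langle 1\rangle$, one obtains $\Phi(v)=\deg(g)\cdot\langle 1\rangle=\deg(g)$ in $W(k)$. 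By Lemma~\ref{degree}, $\deg(g)=\langle 1,1\rangle$, so $\Phi(v)=\langle 1,1\rangle$.

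It remains to reach a contradiction. As $\sqrt{-1}\notin k$, the equation $a^2+b^2=0$ has no nontrivial solution in $k$, so $\langle 1,1\rangle$ is anisotropic, hence nonzero in $W(k)$; thus $\Phi(v)\ne 0$. Now suppose $v$ were the first row of some $M\in GL_n(S_n)$, with $u:=\det M\in S_n^\times$. Left-multiplying $M$ by $\mathrm{diag}(1,\ldots,1,u^{-1})$ rescales only the last row, hence leaves the first row equal to $v$ (using $n\ge 2$), while making the determinant $1$; so $v\in e_1 SL_n(S_n)$ and therefore $\Phi(v)=\Phi(e_1)$. But $e_1=(1,0,\ldots,0)$ is the constant morphism $S_n\to(\A^n-0)$ at a rational point, which factors through $\spec k$, so $\phi(e_1)$ lies in the image of $p^*\colon GW^{n-1}(k)\to GW^{n-1}(S_n)$ and hence $\Phi(e_1)=0$. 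This contradicts $\Phi(v)=\langle 1,1\rangle\ne 0$, proving that $v$ is not completable in an invertible matrix.

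The substantive content lies entirely in the results already available: Proposition~\ref{euler} (the Euler-class description of $\phi$) and the localization-sequence computation of $g^*\theta$ yield the identity $\Phi(g\circ p_n)=\deg(g)$, while the intricate finite-length Witt-class calculation of Lemma~\ref{degree} produces the value $\langle 1,1\rangle$. What is left here is formal: the identification $v=g\circ p_n$, the reduction of ``completable in an invertible matrix'' to membership in $e_1 SL_n(S_n)$, the vanishing $\Phi(e_1)=0$, and the elementary remark that $\langle 1,1\rangle\neq 0$ in $W(k)$ exactly because $\sqrt{-1}\notin k$. Accordingly I do not expect a serious obstacle in the assembly; all the difficulty has been front-loaded into the lemmas.
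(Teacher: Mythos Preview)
Your proof is correct and follows essentially the same route as the paper: identify $v=g\circ p_n$, compute $\Phi(v)=p_n^*g^*\theta(x_1,\ldots,x_n)=\deg(g)=\langle 1,1\rangle$ via Lemma~\ref{degree}, and use $\sqrt{-1}\notin k$ to conclude $\Phi(v)\neq 0$. You add two small but appropriate elaborations the paper leaves implicit: the rescaling argument reducing ``first row of some $M\in GL_n$'' to ``first row of some $M\in SL_n$'', and the explicit verification that $\Phi(e_1)=0$.
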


\begin{proof}
Consider the following diagram:
$$\xymatrix{S_n\ar[r]^-p & (\A^n-0)\ar[r]^-g & (\A^n-0)}$$ 
Then $(x_1^2-x_2^2,x_1x_2,x_3,\ldots,x_n)$ is the unimodular row associated to the morphism of schemes $gp:S_3\to \A^3-0$. We have 
$$\phi(gp)=(gp)^*(\theta(x_1,\ldots,x_n))=p^*g^*(\theta(x_1,\ldots,x_n)).$$ 
By definition, $g^*(\theta(x_1,\ldots,x_n))=\mathrm{deg}(g)$, which is $\langle 1,1\rangle$ by Lemma \ref{degree}. Then $p^*g^*(\theta(x_1,\ldots,x_n))=\langle 1,1\rangle \cdot \theta(x_1,\ldots,x_n)$ and $\Phi(x_1^2-x_2^2,x_1x_2,x_3,\ldots,x_n)=\langle 1,1\rangle$. Now $\langle 1,1\rangle=0$ in $W(k)$ if and only if $-1$ is a square in $k$. It follows that the unimodular row $(x_1^2-x_2^2,x_1x_2,x_3,\ldots,x_n)$ is not completable.
\end{proof}

We now give the counter-example to M. V. Nori's question. 

\begin{thm}\label{counter}
Let $k$ be a field such that $\sqrt -1\not\in k$. Consider the unimodular row $(x_1,x_2,x_3)\in Um_3(S_3)$ and the map $g:(\A^3-0)\to (\A^3-0)$ defined by the algebra homomorphism $\varphi:k[x_1,x_2,x_3]\to k[x_1,x_2,x_3]$ given by $\varphi(x_1)=x_1^2-x_2^2$, $\varphi(x_2)=x_1x_2$ and $\varphi(x_3)=x_3$. Then $k[x_1,x_2,x_3]/(x_1^2-x_2^2,x_1x_2,x_3)$ is of length $4$, but $(x_1^2-x_2^2,x_1x_2,x_3)\in Um_3(S_3)$ is not completable.
\end{thm}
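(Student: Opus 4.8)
The plan is to deduce the theorem as the $n=3$ instance of the corollary established just above, supplemented by an elementary length computation; the only substantive ingredient is Lemma \ref{degree}, which I take as given.

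First I would compute the length of $B:=k[x_1,x_2,x_3]/(x_1^2-x_2^2,x_1x_2,x_3)$. Since $x_3$ lies in the defining ideal, $B\cong k[x_1,x_2]/(x_1^2-x_2^2,x_1x_2)$, a graded ring. There $x_1x_2=0$ and $x_1^2=x_2^2$, so $x_1^3=x_1x_2^2=(x_1x_2)x_2=0$ and likewise $x_2^3=0$; hence everything of degree $\geq 3$ vanishes. Degree by degree, $B_0=k$, $B_1=kx_1\oplus kx_2$, and $B_2=k\cdot x_1^2$ (the quadratic part of the ideal, spanned by $x_1^2-x_2^2$ and $x_1x_2$, is two-dimensional inside the three-dimensional space of quadratic monomials). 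Thus $l(B)=1+2+1=4$; this is the same count appearing in the proof of Lemma \ref{degree}, where $R/(M,x^2)=R/\mathfrak m^2$ has length $3$ and $R/M$ fits in an extension with one-dimensional kernel. In particular $2!$ divides $l(B)$, so the hypotheses of Nori's question are met.

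Next I would show that $v:=(x_1^2-x_2^2,x_1x_2,x_3)$ is not completable in $Um_3(S_3)$. Since $\varphi(\mathfrak m)\subset\mathfrak m$ and $V(\varphi(x_1),\varphi(x_2),\varphi(x_3))=\{0\}$, the morphism $g$ really is a self-map of $\A^3-0$, and $v$ is the unimodular row attached to the composite $g\circ p:S_3\to\A^3-0$, where $p:S_3\to\A^3-0$ is the projection, i.e. the row $(x_1,x_2,x_3)$ on $S_3$. By naturality of $\phi$ we have $\phi(g\circ p)=p^{*}g^{*}(\theta(x_1,x_2,x_3))$; by the lemma preceding Definition \ref{deg} together with that definition, $g^{*}(\theta(x_1,x_2,x_3))=\mathrm{deg}(g)\cdot\theta(x_1,x_2,x_3)$; and $\mathrm{deg}(g)=\langle 1,1\rangle$ by Lemma \ref{degree}. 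Under the identification $GW^{2}_{red}(S_3)=W(k)$ this gives $\Phi(v)=\langle 1,1\rangle$. On the other hand $S_3$ is smooth (a smooth affine quadric) and $n=3$ is odd, so Corollary \ref{symbol} applies and $\Phi$ is defined on $Um_3(S_3)/SL_3(S_3)$; hence every completable row has the same $\Phi$-value as $e_1=(1,0,0)$, which is elementarily equivalent to a row containing a unit and therefore has $\Phi(e_1)=\phi(e_1)=0$ by Lemma \ref{wms}. If $v$ were completable we would get $\langle 1,1\rangle=0$ in $W(k)$, i.e. $-1$ is a square in $k$, contradicting the hypothesis $\sqrt{-1}\notin k$.

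Once Lemma \ref{degree} is granted, none of this is hard: the statement is a formal consequence of the identity $\Phi(g\circ p)=\mathrm{deg}(g)$ in $W(k)$ and the non-vanishing of $\langle 1,1\rangle$ over a field with no square root of $-1$. The real work sits upstream, in the sub-Lagrangian reduction of Lemma \ref{degree} that evaluates $\mathrm{deg}(g)$; the only bookkeeping peculiar to the present statement is the length count of the first step above.
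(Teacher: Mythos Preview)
Your proof is correct and follows essentially the same route as the paper: the theorem is the $n=3$ case of the preceding corollary, whose argument you have reproduced (compute $\Phi(g\circ p)=\mathrm{deg}(g)=\langle 1,1\rangle$ via Lemma \ref{degree}, and observe that $\langle 1,1\rangle\neq 0$ in $W(k)$ precisely when $\sqrt{-1}\notin k$). The only addition is your explicit length-$4$ computation, which the paper leaves implicit but which is indeed part of the statement; your graded count is the cleanest way to see it.
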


\begin{proof}
Clear from the above corollary.
\end{proof}

This example shows that Nori's question has to be strengthened. We propose the following:

\begin{quest}[Strong Nori's question]\label{nori+}
Let $R$ be a $k$-algebra, $n\in\N$ be odd and let $f:\spec R\to (\A^n-0)$ be a unimodular row. Let $\varphi:k[x_1,\ldots,x_n]\to k[x_1,\ldots,x_n]$ such that $\varphi(\mathfrak m)\subset \mathfrak m$ and that $(n-1)!$ divides the length of $k[x_1,\ldots x_n]/\varphi(\mathfrak m)$. Let $g:(\A^n-0)\to (\A^n-0)$ the morphism induced by $\varphi$. If the degree $\mathrm{deg}(g)=0$, then the unimodular row $gf:\spec R\to (\A^n-0)$ is completable.
\end{quest}

\begin{thm}
If $\Phi:Um_3(S_3)/SL_3(S_3)\to W(k)$ is injective, then Question \ref{nori+} has an affirmative answer for $n=3$.
\end{thm}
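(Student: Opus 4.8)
The plan is to reduce Nori's question for $n=3$ to a computation of the symbol $\Phi$ evaluated on the composite $gp$. First I would observe that, by the discussion in Section \ref{um}, it suffices to prove completability over the universal example: any unimodular row $f:\spec R\to (\A^3-0)$ factors through the unimodular affine scheme $S_3$ as $f=p f'$ with $f':\spec R\to S_3$, and since $\varphi$ does not depend on $R$, the row $gf$ is the pullback along $f'$ of the row $gp:S_3\to (\A^3-0)$. Because $\Phi$ is compatible with pullback and with the action of $SL_3$, it is enough to show that $gp\in Um_3(S_3)$ is completable, i.e. that its class in $Um_3(S_3)/SL_3(S_3)$ is the class of a completable row; under the injectivity hypothesis on $\Phi$, this in turn follows once we show $\Phi(gp)=0$ in $W(k)$ (note $\Phi(e_1)=\Phi(1,0,0)$ is zero, so the completable class maps to $0$, and injectivity forces $gp$ into that class).

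Next I would compute $\Phi(gp)$. By Corollary \ref{symbol} and the definition of $\phi$, $\Phi(gp)=\phi(gp)=(gp)^*(\theta(x_1,x_2,x_3))=p^*g^*(\theta(x_1,x_2,x_3))$. By the Lemma preceding Definition \ref{deg} we have $g^*(\theta(x_1,x_2,x_3))=\rho(\varphi(x_1),\varphi(x_2),\varphi(x_3))\cdot\theta(x_1,x_2,x_3)=\mathrm{deg}(g)\cdot\theta(x_1,x_2,x_3)$, and since $p^*:GW^2_{red}(\A^3-0)\to GW^2_{red}(S_3)\cong W(k)$ is an isomorphism sending $\theta(x_1,x_2,x_3)$ to $\langle 1\rangle$, we get $\Phi(gp)=\mathrm{deg}(g)$ in $W(k)$. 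The hypothesis of Question \ref{nori+} is precisely that $\mathrm{deg}(g)=0$; hence $\Phi(gp)=0$.

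Finally, I would invoke injectivity of $\Phi:Um_3(S_3)/SL_3(S_3)\to W(k)$: since the completable row $e_1=(1,0,0)$ has $\Phi(e_1)=0$ and $\Phi(gp)=0$, injectivity gives $[gp]=[e_1]$ in $Um_3(S_3)/SL_3(S_3)$, which means exactly that $gp$ lies in the $SL_3(S_3)$-orbit of $e_1$, i.e. $gp$ is completable. Pulling back along $f'$ shows $gf$ is completable, as required. The one point that needs a little care — and which I expect to be the main (though minor) obstacle — is the reduction step: one must check that the universal-example factorization $f=pf'$ interacts correctly with $\Phi$ and with completability, namely that if $gp$ is completable in $Um_3(S_3)$ then $gf=(gp)\circ f'$ is completable in $Um_3(R)$, which is immediate from functoriality of the diagram in Section \ref{um} defining completability (a factorization of $gp$ through $r:SL_3\to(\A^3-0)$ pulls back along $f'$), and that $\Phi$ being defined for the regular ring $S_3$ is legitimate here. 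Everything else is a direct chain of the lemmas already established.
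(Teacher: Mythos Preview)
Your proposal is correct and follows essentially the same approach as the paper: factor $f$ through $S_3$ via $f'$, observe that $\Phi(gp)=\mathrm{deg}(g)=0$, use injectivity of $\Phi$ to conclude that $gp$ is completable (i.e.\ factors through $r:\SL 3\to(\A^3-0)$), and then pull back along $f'$. The only difference is that you spell out the computation $\Phi(gp)=\mathrm{deg}(g)$ and the fact $\Phi(e_1)=0$ explicitly, whereas the paper leaves these implicit (having established them earlier in the corollary preceding Theorem~\ref{counter} and in Lemma~\ref{wms}).
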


\begin{proof}
Let $f:\spec R\to (\A^3-0)$ be a unimodular row. Then $f$ factors through $S_3$ as indicated in the following diagram:
$$\xymatrix{\spec R\ar[r]^-{f^\prime}\ar[rd]_-f & S_3\ar[d]^-p \\
 & (\A^3-0)}$$
Let $g:(\A^3-0)\to (\A^3-0)$ be a morphism as in the conjecture. Consider the unimodular row $gp:S_3\to (\A^3-0)$. Let $r:\SL 3\to (\A^3-0)$ be the projection to the first row. If $\mathrm{deg}(g)=0$ and $\Phi$ is injective, it follows that $gp:S_3\to (\A^3-0)$ factors through $\SL 3$. We then have a commutative diagram:
$$\xymatrix{ & S_3\ar[d]^-p\ar[rd]^-{\exists q} & \\
\spec R\ar[ru]^-{f^\prime}\ar[r]_-f\ar[rd]_-{gf} & (\A^3-0)\ar[d]^-g & \SL 3\ar[ld]^-r\\
 &(\A^3-0) & }$$
Therefore $gf=rqf^\prime$ factors through $\SL 3$, thus showing that $gf$ is completable.
\end{proof}



\bibliography{biblio_nori.bib}{}

\def\cprime{$'$}
\begin{thebibliography}{10}

\bibitem{Ba2}
Paul Balmer.
\newblock Products of degenerate quadratic forms.
\newblock {\em Compos. Math.}, 141(6):1374--1404, 2005.

\bibitem{Baba}
Paul Balmer.
\newblock Witt groups.
\newblock In {\em Handbook of $K$-theory. Vol. 1, 2}, pages 539--576. Springer,
  Berlin, 2005.

\bibitem{BG}
Paul Balmer and Stefan Gille.
\newblock Koszul complexes and symmetric forms over the punctured affine space.
\newblock {\em Proc. London Math. Soc. (3)}, 91(2):273--299, 2005.

\bibitem{BW}
Paul Balmer and Charles Walter.
\newblock A {G}ersten-{W}itt spectral sequence for regular schemes.
\newblock {\em Ann. Sci. \'Ecole Norm. Sup. (4)}, 35(1):127--152, 2002.

\bibitem{Bass75}
Hyman Bass.
\newblock Lib\'eration des modules projectifs sur certains anneaux de
  polyn\^omes.
\newblock In {\em S\'eminaire {B}ourbaki, 26e ann\'ee (1973/1974), {E}xp. {N}o.
  448}, pages 228--354. Lecture Notes in Math., Vol. 431. Springer, Berlin,
  1975.

\bibitem{BS2}
S.~M. Bhatwadekar and Raja Sridharan.
\newblock The {E}uler class group of a {N}oetherian ring.
\newblock {\em Compositio Math.}, 122(2):183--222, 2000.

\bibitem{CH}
Baptiste Calm\`es and Jens Hornbostel.
\newblock Push-forwards for witt groups of schemes.
\newblock Preprint available at \texttt{http://arxiv.org/pdf/0806.0571}, to
  appear in Comment. Math. Helv., 2008.

\bibitem{EL}
David Eisenbud and Harold~I. Levine.
\newblock An algebraic formula for the degree of a {$C\sp{\infty }$} map germ.
\newblock {\em Ann. Math. (2)}, 106(1):19--44, 1977.
\newblock With an appendix by Bernard Teissier, ``Sur une in{\'e}galit{\'e}
  {\`a} la Minkowski pour les multiplicit{\'e}s''.

\bibitem{Fa1}
Jean Fasel.
\newblock Groupes de {C}how-{W}itt.
\newblock {\em M\'em. Soc. Math. Fr. (N.S.)}, (113):viii+197, 2008.

\bibitem{Fa4}
Jean Fasel.
\newblock The excess intersection formula for {G}rothendieck-{W}itt groups.
\newblock {\em Manuscripta Math.}, 130(4):411--423, 2009.

\bibitem{Fasel10}
Jean Fasel.
\newblock Projective modules over the real algebraic sphere of dimension 3.
\newblock {\em J. Algebra}, 325:18--33, 2011.

\bibitem{Fa3}
Jean Fasel.
\newblock Some remarks on orbit sets of unimodular rows.
\newblock {\em Comment. Math. Helv.}, 86(1):13--39, 2011.

\bibitem{Gi3}
Stefan Gille.
\newblock The general d\'evissage theorem for {W}itt groups of schemes.
\newblock {\em Arch. Math. (Basel)}, 88(4):333--343, 2007.

\bibitem{Mohan}
N.~Mohan Kumar.
\newblock A note on unimodular rows.
\newblock {\em J. Algebra}, 191(1):228--234, 1997.

\bibitem{QSS}
H.-G. Quebbemann, W.~Scharlau, and M.~Schulte.
\newblock Quadratic and {H}ermitian forms in additive and abelian categories.
\newblock {\em J. Algebra}, 59(2):264--289, 1979.

\bibitem{Sch}
Marco Schlichting.
\newblock The {M}ayer-{V}ietoris principle for {G}rothendieck-{W}itt groups of
  schemes.
\newblock {\em Invent. Math.}, 179(2):349--433, 2010.

\bibitem{Suslin91}
A.~A. Suslin.
\newblock {\em {$K$}-theory and {$K$}-cohomology of certain group varieties},
  volume~4 of {\em Adv. Soviet Math.}, pages 53--74.
\newblock Amer. Math. Soc., Providence, RI, 1991.

\bibitem{Suslin77c}
A.A. Suslin.
\newblock On stably free modules.
\newblock {\em Math. U.S.S.R. Sbornik}, pages 479--491, 1977.

\bibitem{vdk}
Wilberd van~der Kallen.
\newblock A module structure on certain orbit sets of unimodular rows.
\newblock {\em J. Pure Appl. Algebra}, 57(3):281--316, 1989.

\bibitem{vdk2}
Wilberd van~der Kallen.
\newblock From {M}ennicke symbols to {E}uler class groups.
\newblock In {\em Algebra, arithmetic and geometry, Part I, II (Mumbai, 2000)},
  volume~16 of {\em Tata Inst. Fund. Res. Stud. Math.}, pages 341--354. Tata
  Inst. Fund. Res., Bombay, 2002.

\bibitem{VS}
L.~N. Vaser{\v{s}}te{\u\i}n and A.~A. Suslin.
\newblock Serre's problem on projective modules over polynomial rings, and
  algebraic {$K$}-theory.
\newblock {\em Izv. Akad. Nauk SSSR Ser. Mat.}, 40(5):993--1054, 1199, 1976.

\bibitem{Wa}
Charles Walter.
\newblock Grothendieck-{W}itt groups of triangulated categories.
\newblock Preprint available at \texttt{www.math.uiuc.edu/K-theory/0643/},
  2003.

\end{thebibliography}
\bibliographystyle{plain}


\end{document}